%% file: Pan_manuscript_clean.tex
\documentclass[hidelinks,onefignum,onetabnum]{siamart220329}

\usepackage[normalem]{ulem}

\input{ex_shared}

\ifpdf
\hypersetup{
  pdftitle={A Two-Sided Sketching Algorithm for Low-rank Tensor Train Approximation},
  pdfauthor={Gaohang Yu, Yihao Pan, Ailun Jian, Xiaohao Cai}
}
\fi

\begin{document}

\maketitle

\begin{abstract}
Tensor train (TT) decomposition is a powerful method to acquire low-rank tensors. However, the computational process is frequently obstructed by the large-scale matrix singular value decomposition (SVD). The sketching algorithm serves as an efficient data compression technique that can quickly derive low-rank matrix approximations. In this paper, we propose a randomized algorithm to obtain the TT approximation of tensors using a one-pass sketching algorithm and subspace iteration, and offer thorough error-bound and robustness analysis. Numerical experiments on synthetic and real-world datasets demonstrate the effectiveness and efficiency of the proposed algorithm.
\end{abstract}

\begin{keywords}
Tensor train, subspace iteration, double sketch, singular value decomposition, robustness analysis
\end{keywords}

\begin{MSCcodes}
68Q25, 68R10, 68U05
\end{MSCcodes}

\section{Introduction}
In real-world scenarios, large high-dimensional datasets can often be naturally expressed as tensors, frequently showing a low-rank structure that can be described with significantly fewer parameters. As a result, low-rank tensor approximations have been widely used in various areas, including machine learning, graph analysis, and signal processing, among others \cite{kolda2009}\cite{comon2014}\cite{grasedyck2013}.
Traditional tensor decomposition techniques encompass canonical (CP) decomposition \cite{comon2004}, Tucker decomposition \cite{de2000}, tensor singular value decomposition (t-SVD)\cite{kilmer2011}, tensor train (TT) decomposition \cite{oseledets2011}, tensor ring (TR) decomposition \cite{zhao2016}, and others. CP decomposition can effectively break down a tensor into the sum of multiple rank-1 tensors. However, determining the exact CP rank is challenging. Although Tucker decomposition provides enhanced stability over CP decomposition, it is plagued by the curse of dimensionality. In contrast, TT decomposition overcomes the curse of dimensionality and is more dependable. This paper primarily focuses on TT decomposition, which is gaining popularity due to its robustness and efficiency.
        
TT decomposition involves breaking down large-scale tensors into several third-order tensors. One straightforward method is TT-SVD \cite{oseledets2011}, which utilizes auxiliary unfolding matrices for SVD. Nonetheless, conducting SVD on large-scale matrices is highly time-intensive. There are various methods to make the computation both efficient and accurate. One particular category is randomized methods \cite{yu2018,batselier2018,tropp2019,martinsson2020}, which have shown outstanding efficiency and reliability with large-scale data. Halko et al. \cite{halko2011} introduced a randomized SVD (R-SVD) technique that effectively computes low-rank approximations of large-scale matrices. When the original data is so vast that it can only be read once by the core memory, so one-pass algorithms become essential. Tropp et al. \cite{tropp2017} proposed a one-pass sketching algorithm for streaming data, which can create low-rank approximations of input matrices from random linear sketches of the matrix and also provide error bounds.
     
In recent years, many scholars have extended matrix random methods to tensors, yielding promising results \cite{sun2020,minster2020,chen2023,che2020,daas2022}. Based on R-SVD, Huber et al. \cite{huber2017} proposed a randomized TT decomposition that is dramatically faster than TT-SVD. Che et al. \cite{che2019} provided an adaptive random algorithm to compute a low multilinear rank approximation of tensors with unknown multilinear rank. Shi et al. \cite{shi2023} proposed parallelizable algorithms to compute the decomposition of the TT with theoretical guarantees of precision. Ma et al. \cite{ma2023} recovered a low-rank matrix and low-tubal-rank tensor from a noisy sketch with rigorous theoretical guarantees.
	
Power iteration can improve the accuracy of approximations by reducing tail energy, making it more accurate than simple random projection algorithms. Vogel et al. \cite{vogel1994} introduced the subspace iteration method and considered the efficient iterative computation of partial SVDs. Gu \cite{gu2015} presented a novel error analysis considering randomized algorithms within the subspace iteration framework. Yu et al. \cite{yu2023}  proposed a randomized algorithm for low-rank TT approximation based on randomized block Krylov subspace iteration. Dong et al. \cite{dong2023} combined subspace power iteration with two-sided sketching algorithms and applied it to compute the Tucker approximation of tensors. Zhang et al. \cite{zhang2018} proposed a method that extended a well-known randomized matrix SVD method to the t-SVD, and presented an improved analysis of the randomized simultaneous iteration for matrices.
	 
In this paper, we propose a randomized algorithm named TT-subSKETCH for TT approximations of tensors. {This algorithm utilizes two-sided sketching for low-rank matrix approximation to enhance its efficiency.} By integrating the sketching method with the power iteration approach, we can obtain higher quality TT approximations and deliver thorough error analysis. Additionally, we address low-rank approximations in noisy conditions and expand upon the findings of \cite{ma2023} in the context of TT decomposition. 
	 
The rest of this paper is organized as follows. We recall some background and related methods on TT decomposition in Section \ref{section:2}. In Section \ref{section:3}, we extend the error bounds of the subspace power iteration with two-sided sketching to the \( \ell_2 \)-norm and provide our proposed algorithm TT-subSKETCH with error analysis; moreover, we also provide error bounds for noisy sketch of tensor. Thorough numerical experiments on synthetic and real-world data tensor are given in Section \ref{section:4},  verifying the effectiveness and robustness of the proposed algorithm. Finally, we conclude in Section \ref{section:5}.

\section{Background}
\label{section:2}

\subsection{Notations and basic operations}
In this paper, matrices are denoted by capital letters like $A$, tensors by Euler script letters like $\mathcal{A}$, and $\mathbb{R}$ represents the real number space. For a $d$-th order tensor $ \mathcal{A}\in\mathbb{R}^{n_1\times n_2\times\cdots\times n_d} $, its $ (i_1,i_2,\cdots ,i_d) $-th element is represented by $\mathcal{A}(i_1,i_2,\cdots ,i_d) $. For matrix $A$ with orthogonal columns, the notation $A_\perp$ represents the orthogonal complement of $A$, which means that the column vectors of $A$ and $A_\perp$ form a complete orthogonal basis. Define the Frobenius norm of $ \mathcal{A} $ as
	    \begin{equation}
	    	 \lVert \mathcal{A} \rVert_F := \sqrt{\langle \mathcal{A},\mathcal{A} \rangle} =  \sqrt{\Sigma_{i_1,i_2,\cdots ,i_d} \mathcal{A}(i_1,i_2,\cdots ,i_d)^2 } .
    	\end{equation}

The mode-$ \alpha $ product of tensor $ \mathcal{A}\in\mathbb{R}^{n_1\times n_2\times\cdots\times n_d} $ by matrix $ B\in\mathbb{R}^{M\times n_\alpha} $ is designated as $ \mathcal{A} \times_\alpha B = \mathcal{C} \in \mathbb{R}^{n_1\times\cdots\times n_{\alpha-1}\times M\times n_{\alpha+1}\times\cdots\times n_d} $, with entries
        \begin{equation}
		\mathcal{C}(i_1,\cdots,i_{\alpha-1},m,i_{\alpha+1},\cdots,i_d) = \! \sum_{i_j=1}^{n_\alpha}\mathcal{A}(i_1,\cdots,i_{\alpha-1},i_j,i_{\alpha+1},\cdots,i_d)B(m,i_j).
	\end{equation} 
The tensor-tensor product of two tensors $ \mathcal{A}\in\mathbb{R}^{n_1\times \cdots\times n_d} $ and $ \mathcal{B}\in\mathbb{R}^{m_1\times \cdots\times m_e} $ with equal the size of the $\alpha$-mode of the tensors $ n_\alpha=m_\beta $ produces a $ (d+e-2) $-th order tensor $ \mathcal{C} $, i.e.,
	 \begin{equation}	 	\mathcal{C}=\mathcal{A}\times^\beta_\alpha\mathcal{B}.
	 \end{equation}
where $ \mathcal{C}\in\mathbb{R}^{n_1\times \cdots\times n_{\alpha-1}\times n_{\alpha+1}\times\cdots\times n_d\times m_1\times\cdots\times m_{\beta-1}\times m_{\beta+1}\times\cdots\times m_e}. $

\begin{table} [h]
	\centering
	\begin{tabular}{cc}
		\hline
		Symbol &  Description\\
		\hline
		$\mathbb{R}$&real number space\\
		$p$ &   oversampling parameter\\ 
		$q$ & the number of power iterations\\
		${A}$	& matrix \\
		$\mathcal{A}$	& tensor  \\
		${\sigma_{i}}(A)$ & the ${(i)}_{th}$ singular value of $A$\\
		$ A^{T}$ & the transpose of $ A$ \\
		$A^\dagger$ & the pseudoinverse of $ A$\\
		$\hat{A}$ & the approximation tensor of $A$ \\
        $A_\perp$ & the orthogonal complement of $A$ \\
		$\| \cdot \|_2$ & Spectral norm\\
		$\| \cdot \|_F$ & Frobenius norm\\
		\hline
	\end{tabular}
	\caption{Symbol description} \label{Tab:symbol}
\end{table}

\begin{Def}
		\label{Def1}
		(Mode-$(1,2,\cdots,k)$ regular matricization \cite{kolda2009}). \\The mode-$ (1,2,\cdots,k) $ regular matricization of a tensor $\mathcal{A}$ is denoted by $A_{[k]}\in\mathbb{R}^{\prod_{n=1}^{k}I_n\times\prod_{n=k+1}^N I_n}$. It can be implemented by calling the reshape function in Matlab:
        \[
        A_{[k]}=\mbox{reshape}(\mathcal{A},\prod_{n=1}^{k}I_n,\prod_{n=k+1}^N I_n),
        \]
    \noindent where the inverse operator of reshape is denoted by ``unreshape'',\\ i.e., $\mathcal{A}=\text{unreshape}(A_{[k]})$.
\end{Def}

\begin{Def}
    	\label{Def2}
    	(Tensor train format \cite{oseledets2011}). Let $ \mathcal{A}\in\mathbb{R}^{n_1\times n_2\times\cdots\times n_d} $ be a tensor of order d. A factorization
    	\begin{equation}		\mathcal{A}=\mathcal{G}_1\times_3^1\mathcal{G}_2\times_3^1\cdots\times_3^1\mathcal{G}_d
    	\end{equation}
    	of $\mathcal{A} $, into core tensors $\mathcal{G}_i\in\mathbb{R}^{r_{i-1}\times n_i\times r_i} (r_0=r_d=1) $, is called a TT decomposition of $\mathcal{A} $. The array of the dimensions $ \mathbf{r}=(r_1,\cdots,r_{d-1}) $ is the TT-rank of $ \mathcal{A} $ defined as	
    \begin{equation}
    	\mbox{rank}_{\mbox{TT}}(\mathcal{A})=(r_1,r_2,\ldots,r_{d-1})=\left(\mbox{rank}(A_{[1]}),\mbox{rank}(A_{[2]}),\ldots,\mbox{rank}(A_{[d-1]})\right).
    \end{equation}
\end{Def}

\begin{Def}
		\label{Def3}
		(Tail Energy \cite{higham1989}). The $j$-th tail energy of matrix $ A $ is defined as
			\begin{equation}
					\tau^2_j(A)=\min\limits_{{\rm rank}(B)<j}\lVert A-B \rVert^2_F = \sum\limits_{i\ge j}\sigma^2_i(A),
			\end{equation}
	where $\sigma_i(A)$ is the $i$-th singular value of $A$.
\end{Def}

\subsection{TT-SVD}
{
The TT low-rank approximation model for solving the tensor $\mathcal{A}$ can be expressed as\cite{che2024}
\begin{equation}\label{3.1}
  \mathop{\min }\limits_{\mathcal{G}_1,\mathcal{G}_2,\cdots,\mathcal{G}_d}\Vert \mathcal{A}-\mathcal{G}_1\times_3^1\mathcal{G}_2\times_3^1\cdots\times_3^1\mathcal{G}_d\Vert_F,
\end{equation}
where $\mathcal{G}_i\in\mathbb{R}^{r_{i-1}\times n_i\times r_i} (r_0=r_d=1,i\in [1,d]) $
for $i=1,2,\cdots,d-1,$ the TT core $\mathcal{G}_i$ satisfy
\begin{center}
  $G_i^\top G_i=I_{r_i}, G_i=reshape(\mathcal{G}_i,[r_{i-1}n_i,r_i]).$
\end{center}
Assume ${\mathcal{T}_1,\mathcal{T}_2,\cdots,\mathcal{T}_d}$ form a
solution of (\ref{3.1}). Let $T_i=reshape(\mathcal{T}_i,[r_{i-1}n_i,r_i]) \\(i=1,2,\cdots,d-1)$. Define
\begin{center}
  $\begin{cases}
  &\mathcal{A}_0=\mathcal{A}, \\
  &\mathcal{A}_1=\mathcal{T}_1\times_1^1\mathcal{A}_0\in\mathbb{R}^{r_1\times n_2\times\cdots\times n_d}, \\
  &\mathcal{A}_i=\mathcal{T}_i\times_{1,2}^{1,2}\mathcal{A}_{i-1}\in\mathbb{R}^{r_i\times n_{i+1}\times\cdots\times n_d}, i=2,3,\cdots,d-1,\\
  &A^{(i)}=reshape(\mathcal{A}_i,[r_{i-1}n_i,n_{i+1}\cdots n_d]), i=1,2,\cdots,d-1.
\end{cases}$
\end{center}
We have
\begin{equation}\label{eq11}
  \Vert \mathcal{A}-\mathcal{T}_1\times_3^1\mathcal{T}_2\times_3^1\cdots\times_3^1\mathcal{T}_d\Vert_F\leq \sum_{i=1}^{d-1} \left\| A^{(i)} - T_i T_i^\top A^{(i)} \right\|_F.
\end{equation}
An approximate solution can be obtained by solving the following d-1 subproblem
for $i=1,2,\cdots,d-1$, when $r_i\leq min\{r_{i-1}n_i,n_{i+1},\cdots,n_d\}(r_0=1)$, the goal is to find an orthogonal matrix $T_i\in\mathbb{R}^{r_{i-1}n_i\times r_i}$ that satisfies
\begin{equation}
  T_i=\arg \min_{Q_i}\left\| A^{(i)}-Q_iQ_i^\top A^{(i)} \right\|_F,
\end{equation}
where $Q_i\in\mathbb{R} ^{r_{i-1}n_i\times r_i}$ is orthogonal.}

TT-SVD \cite{oseledets2011} stands as a classical algorithm for TT decomposition. Its essence lies in the intuition that at each step, an SVD of the auxiliary unfolding matrix is executed. The detailed TT-SVD algorithm is provided in Algorithm \ref{alg:TTSVD}. The {\tt numel} function in MATLAB returns the total number of elements in an array, while the {\tt reshape} function changes the size of an array without changing the data itself.

	\begin{algorithm}[H]
		\caption{TT-SVD \cite{oseledets2011}}
		\label{alg:TTSVD}
		\renewcommand{\algorithmicrequire}{\textbf{Input:}}
		\renewcommand{\algorithmicensure}{\textbf{Output:}}
		\begin{algorithmic}[1]
			\REQUIRE Tensor $ \mathcal{A}\in\mathbb{R}^{n_1\times n_2\times\cdots\times n_d} $, and prescribed accuracy $\varepsilon$ 
			\ENSURE Cores $ \mathcal{G}_1,\cdots,\mathcal{G}_d $ of the  TT-approximation $ \mathcal{B} $ 
			to $ \mathcal{A} $ in the TT-format satisfying $ \lVert \mathcal{A} -\mathcal{B}\rVert_F \leq \varepsilon \lVert \mathcal{A} \rVert_F $
\vspace{0.05in}
            
		    \STATE Compute truncation parameter $ \delta =\frac{\varepsilon}{\sqrt{d-1}}\lVert \mathcal{A} \rVert_F $
		    \STATE $ C=\mathcal{A}, r_0=1 $
			 
			\FOR{$k = 1$ to $d-1$}
			\STATE $ C :=$ {\tt reshape}$(C,[r_{k-1} n_k,\frac{{\tt numel}(\mathcal{A})}{r_{k-1} n_k}])$
			\STATE Compute $ \delta $-truncated SVD: $ C=USV+E, \lVert E \rVert_F \leq \delta , r_k = {\rm rank}_\delta (C) $
			
			\STATE New core: $ \mathcal{G}_k=$ {\tt reshape}$ (U,[r_{k-1},n_k,r_k]) $
			\STATE $ C=SV^\top $
			\ENDFOR
			\STATE $ \mathcal{G}_d=C$
			\RETURN $ \mathcal{G}_1 ,\mathcal{G}_2 , \cdots , \mathcal{G}_d $
		\end{algorithmic}
	\end{algorithm}
    
The computational complexity of the TT-SVD is controlled by the $(d-1)$ matrix SVDs. Let $ n:=\max(n_1,\cdots,n_d)$, $r:=\max(r_1,\cdots,r_d)$, and the cost scales be $ \mathcal{O}(n^{d+1}+\sum_{i=1}^{d-2}n^{d+1-i})$. Due to the structural nature of tensor data universally being disrupted after the initial SVD, sparse or structured tensors do not lead to significant computational efficiency improvements. The following theorem shows the error bound for TT-SVD.
    
\begin{theorem}\label{Thm1}
		(\cite{oseledets2011}, Theorem 2.2). Suppose that the k-matricization $ A_{[k]} $ of tensor $ \mathcal{A} $ satisfies 
		\begin{equation}
			 A_{[k]} =R_k+E_k, {\rm rank}(R_k) = r_k,  \lVert E_k \rVert_F =\varepsilon_k,  k=1,\cdots,d-1.
		\end{equation}
	TT-SVD computes a tensor $ \mathcal{B} $ in the TT-format with TT-rank $\mathbf{r}=(r_1,\cdots,r_d-1) $ and
	\begin{equation}
		\lVert \mathcal{A}-\mathcal{B}\rVert_F\leq\sqrt{\sum_{k=1}^{d-1}\varepsilon_k^2} \ .
	\end{equation}
\end{theorem}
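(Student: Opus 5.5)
The plan is to follow Oseledets' inductive argument on the order $d$, using two facts: orthogonal projections split errors by Pythagoras, and restricting an unfolding by an orthogonal projection cannot increase its best rank-$r$ approximation error. In step $k=1$, TT-SVD computes the truncated SVD $A_{[1]} = U_1 S_1 V_1^\top + E$ with $U_1$ orthonormal and $r_1$ columns. Since $\operatorname{rank}(A_{[1]}-E_1)=r_1$ and the truncated SVD is optimal (Eckart--Young), we get $\|E\|_F = \|(I-U_1U_1^\top)A_{[1]}\|_F \le \varepsilon_1$. Write $B_1 = U_1^\top A_{[1]} \in \mathbb{R}^{r_1\times n_2\cdots n_d}$; this is the matrix $SV^\top$ that the algorithm passes on. Regard $B_1$ as a tensor $\mathcal{A}'$ of order $d-1$ with first mode of size $r_1 n_2$. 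TT-SVD then continues on $\mathcal{A}'$ and returns an approximation $\mathcal{B}'$. The final output is $\mathcal{B} = U_1 \cdot \mathcal{B}'$, meaning $B_{[1]} = U_1 B'_{[1]}$.

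The error then splits orthogonally. Write $A_{[1]} - U_1 B' = (I-U_1U_1^\top)A_{[1]} + U_1(B_1 - B')$. The two pieces have orthogonal column spaces and $U_1$ is an isometry, so
\[
\|\mathcal{A}-\mathcal{B}\|_F^2 = \|(I-U_1U_1^\top)A_{[1]}\|_F^2 + \|\mathcal{A}'-\mathcal{B}'\|_F^2 \le \varepsilon_1^2 + \|\mathcal{A}'-\mathcal{B}'\|_F^2 .
\]

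The main step is to check that $\mathcal{A}'$ satisfies the hypothesis of the theorem with the remaining data $(r_k,\varepsilon_k)_{k\ge2}$, so that induction applies. Its $(k-1)$-th unfolding, for $k\ge 2$, is $A'_{[k-1]} = (I_{n_2\cdots n_k}\otimes U_1^\top)\, A_{[k]}$ after suitable reshaping. The reason is that contracting $U_1^\top$ against the first mode acts only on the row index of $A_{[k]}$, and there it acts through a matrix $W$ with orthonormal rows. Then $W R_k$ has rank at most $r_k$, and $\|W E_k\|_F \le \|E_k\|_F = \varepsilon_k$. So $A'_{[k-1]}$ has a rank-$\le r_k$ approximation with error at most $\varepsilon_k$, and the truncation in later steps, again by Eckart--Young, achieves error at most $\varepsilon_k$.

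I expect the main obstacle to be the bookkeeping that identifies this row-wise action with an orthonormal-row matrix $W$ under the reshape conventions. Once that is settled, induction on $d$ gives $\|\mathcal{A}'-\mathcal{B}'\|_F^2\le\sum_{k=2}^{d-1}\varepsilon_k^2$, and combining with the split above yields the claim. The base case $d=2$ is just Eckart--Young.
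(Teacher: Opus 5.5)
Your proposal is correct and is essentially the inductive proof of Theorem 2.2 in \cite{oseledets2011}; the paper itself gives no proof and only cites that result. Both arguments split the error orthogonally using $U_1^\top(I-U_1U_1^\top)=0$, then observe that the unfoldings of $U_1^\top A_{[1]}$ are $W A_{[k]}$ with $W=I_{n_2\cdots n_k}\otimes U_1^\top$ having orthonormal rows, so they still admit rank-$\le r_k$ approximations with error $\le\varepsilon_k$, and finally apply induction on $d$. Your explicit form of $W$, and your reading of the induction hypothesis with inequalities ($\mathrm{rank}\le r_k$, error $\le\varepsilon_k$) rather than the equalities in the statement, fill in the step that Oseledets leaves as ``not difficult to see.''
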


\subsection{Randomized TT-SVD}
	
When dealing with large-scale data, the computational cost of deterministic SVD is excessively expensive, rendering classical TT-SVD algorithms time-consuming. Randomized low-rank matrix approximation algorithms can approximate large-scale matrices with low-rank matrices while preserving most of the original information. This approach can significantly reduce the processing time of large-scale matrices and enhance computational efficiency. Halko et al. \cite{halko2011} proposed R-SVD for matrices, with arithmetic cost $ \mathcal{O}(rmn+r^2(m+n))$. The procedures of R-SVD are summarized in Algorithm \ref{alg:R-SVD}.

	\begin{algorithm}[!h]
		\caption{R-SVD}
		\label{alg:R-SVD}
		\renewcommand{\algorithmicrequire}{\textbf{Input:}}
		\renewcommand{\algorithmicensure}{\textbf{Output:}}
		\begin{algorithmic}[1]
			\REQUIRE Matrix $ A\in\mathbb{R}^{m\times n} $, target rank $r$, and the oversampling parameter ${p>0}$ 
			\ENSURE Low-rank approximation matrix $ \hat{A}=\hat{U}\hat{S}\hat{V}^\top$ of $A$   
			\STATE Create random Gaussian matrices $ \Omega\in\mathbb{R}^{n\times {(r+p)}} $
			\STATE Calculate $Y=A\Omega $
			\STATE $ [Q,\backsim] = {\tt qr}(Y,0) $
			\STATE $ B=Q^\top A $
			\STATE $ (U,S,V^\top) = {\tt svd}(B)$
			\STATE $ \hat{U}=QU(:,1:r)$, $ \hat{S}=S(1:r,1:r), \hat{V}=V(:,1:r) $
            \RETURN $ \hat{A}=\hat{U}\hat{S}\hat{V}^\top$
			
		\end{algorithmic}
	\end{algorithm}

For a rank-$k$ matrix $\mathbf{A}\in\mathbb{R}^{m\times n}$, its thin-QR decomposition is defined as

\[
[\mathbf{Q},\mathbf{R}] = \texttt{qr}(\mathbf{A}),
\]

where $\mathbf{Q}\in\mathbb{R}^{m\times k}$ and $\mathbf{R}\in\mathbb{R}^{k\times n}$.  
The matrix $\mathbf{Q}$ is column-orthogonal and constitutes an orthonormal basis for the column space of $\mathbf{A}$.

Huber et al. \cite{huber2017} applied the R-SVD algorithm directly to the TT-SVD algorithm, introducing a randomized version of TT-SVD, i.e., TT-rSVD, see Algorithm \ref{alg:TTRS}. The main idea is to enhance computational efficiency by utilizing low-rank approximations of auxiliary unfolding matrices, with arithmetic cost $ \mathcal{O}(r^2\sum_{i=1}^{d-1}n^{d-i+1}) $. The error bound is demonstrated in Theorem \ref{Thm2} below.
	
	\begin{theorem}
		\label{Thm2}
		(\cite{huber2017}, Theorem 2). Given $ \mathcal{A}\in\mathbb{R}^{n_1\times n_2\times\cdots\times n_d} $ and $ s=r+p $ with $ p\ge 4 $. For every $ u,t\ge 1$, the error of the TT-rSVD satisfies 
		\begin{equation}
			\lVert \mathcal{A}-\mathcal{G}_1\times_3^1\mathcal{G}_2\times_3^1\cdots\times_3^1\mathcal{G}_d\rVert_2\leq\sqrt{d-1}\eta(r,p)\min\limits_{{\rm rank}_{\rm TT}(\mathcal{B})\leq(r_1,\cdots,r_{d-1})}\lVert \mathcal{A}-\mathcal{B}\rVert_2,
		\end{equation}
		with probability at least $ (1-5t^{-p}-2e^{-u^2/2})^{d-1} $. The parameter $ \eta $ is given as
		\begin{equation}
			\eta=1+t\sqrt{\frac{12r}{p}}+ut\frac{e\sqrt{r+p}}{p+1}.
		\end{equation}
	\end{theorem}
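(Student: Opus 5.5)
We prove Theorem \ref{Thm2} by combining a spectral-norm error bound for R-SVD on each unfolding with an error-accumulation argument in the spirit of Theorem \ref{Thm1}. The argument has three steps.

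\textbf{Step 1: one matrix step.} Let $A\in\mathbb{R}^{m\times n}$, let $\Omega$ be Gaussian with $s=r+p$ columns, and let $Q$ be an orthonormal basis of $A\Omega$. We use the deviation bound of Halko et al.\ \cite{halko2011}. For $p\ge 4$ and $u,t\ge1$, with probability at least $1-5t^{-p}-2e^{-u^2/2}$,
\[
\|(I-QQ^\top)A\|_2\le\Bigl(1+t\sqrt{\tfrac{12r}{p}}\Bigr)\sigma_{r+1}(A)+ut\tfrac{e\sqrt{r+p}}{p+1}\Bigl(\sum_{j>r}\sigma_j^2(A)\Bigr)^{1/2}.
\]
The tail sum $\bigl(\sum_{j>r}\sigma_j^2\bigr)^{1/2}$ must be controlled by the spectral quantity. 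The crude bound $\sqrt{n}\,\sigma_{r+1}$ is not acceptable, since no such factor appears in $\eta$. I would therefore read the statement as intending the Frobenius tail in the second term, or else reorganize the estimate so that each unfolding's error is bounded by $\eta\,\tau_{r+1}(A_{[k]})$. I regard this as the delicate point and expect the intended bound to be $\eta\cdot\min\|\mathcal{A}-\mathcal{B}\|$ in this sense.

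\textbf{Step 2: accumulation across unfoldings.} Let $P_k$ be the orthogonal projector onto the $k$-th computed basis, lifted to act on the first $k$ modes. The TT-rSVD output is then $\mathcal{B}=P_{d-1}\cdots P_1\mathcal{A}$. I would telescope
\[
\mathcal{A}-\mathcal{B}=\sum_k P_{k-1}\cdots P_1(I-P_k)\mathcal{A}'
\]
where $\mathcal{A}'$ denotes the appropriately reduced tensor. As in the proof of Theorem \ref{Thm1}, the ranges of these terms are mutually orthogonal, since $(I-P_k)$ acts in directions orthogonal to earlier bases. The squared errors therefore add, which gives the factor $\sqrt{d-1}$ after taking the maximum over $k$.

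Each term requires a bound on the tail of the $k$-th reduced unfolding $C_k=Q_{k-1}^\top\cdots$. Projections do not increase singular values, so $\sigma_j(C_k)\le\sigma_j(A_{[k]})$.

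\textbf{Step 3: comparison with the best TT approximant.} For any $\mathcal{B}$ with $\mathrm{rank}_{\rm TT}(\mathcal{B})\le\mathbf{r}$, the unfolding $B_{[k]}$ has rank at most $r_k$. By Eckart--Young,
\[
\sigma_{r_k+1}(A_{[k]})\le\|A_{[k]}-B_{[k]}\|_2=\|\mathcal{A}-\mathcal{B}\|_2,
\]
where the tensor 2-norm is interpreted as the spectral norm of the relevant unfolding. The same argument applies to the tail sums in the Frobenius interpretation.

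\textbf{Probability.} The Gaussian sketches at successive steps are independent, but each step's matrix depends on the earlier outcomes. Conditioning on the previous steps, each stage succeeds with probability at least $1-5t^{-p}-2e^{-u^2/2}$. Multiplying these conditional probabilities gives the stated overall probability $(1-5t^{-p}-2e^{-u^2/2})^{d-1}$.
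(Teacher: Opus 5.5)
The paper gives no proof of this statement; it is quoted from Huber et al.\ \cite{huber2017}, where $\lVert\cdot\rVert$ is the Euclidean (Frobenius) norm of the tensor. That reading is forced, because with spectral norms on both sides the bound already fails for $d=2$. To see this, fix $t,u$ with $5t^{-p}+2e^{-u^2/2}<1$. Let $A$ have $r$ singular values equal to $M=2\eta$, followed by $N\gg(r+p)M^2$ singular values equal to $1$. The $r+p$ samples are then dominated by the tail, so $\lVert(I-QQ^\top)A\rVert_2\approx M>\eta\,\sigma_{r+1}(A)$ with probability close to one.

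You sense this problem but never commit to a norm, so the per-unfolding estimate on which everything rests is missing. The display in Step~1 has a spectral left-hand side. Compared with Halko et al.'s Frobenius bound, it also has the roles of $\sigma_{r+1}$ and the tail interchanged. A bound on $\lVert(I-QQ^\top)C_k\rVert_2$ cannot feed the sum of squared Frobenius errors in Step~2. In the spectral reading Step~3 also fails: $\lVert A_{[k]}-B_{[k]}\rVert_2$ depends on $k$ and is not a single tensor norm $\lVert\mathcal{A}-\mathcal{B}\rVert_2$. Only the Frobenius norm is invariant under unfolding.

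The missing ingredient is the deviation bound for the Frobenius error (\cite{halko2011}, Theorem~10.7). For $p\ge 4$ and $u,t\ge 1$, with failure probability at most $5t^{-p}+2e^{-u^2/2}$,
\[
\lVert(I-QQ^\top)C\rVert_F\le\Bigl(1+t\sqrt{\frac{12r}{p}}\Bigr)\tau_{r+1}(C)+ut\frac{e\sqrt{r+p}}{p+1}\,\sigma_{r+1}(C)\le\eta\,\tau_{r+1}(C),
\]
where the last step uses $\sigma_{r+1}\le\tau_{r+1}$. This is exactly where $\eta$ and the stated probability come from. Apply it to the reduced unfolding $C_k$, and your outline then goes through in the Frobenius norm; it is essentially Huber et al.'s argument:
\begin{itemize}
\item Let $U_{\le k}$ be the orthonormal interface matrix of the first $k$ cores. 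The projectors $U_{\le k}U_{\le k}^\top\otimes I$ are nested, so $\lVert\mathcal{A}-\hat{\mathcal{A}}\rVert_F^2=\sum_{k=1}^{d-1}\lVert(I-Q_kQ_k^\top)C_k\rVert_F^2$.
\item $\tau_{r_k+1}(C_k)\le\tau_{r_k+1}(A_{[k]})$, since $C_k$ is $A_{[k]}$ multiplied on the left by a matrix with orthonormal rows.
\item Eckart--Young gives $\tau_{r_k+1}(A_{[k]})\le\lVert A_{[k]}-B_{[k]}\rVert_F=\lVert\mathcal{A}-\mathcal{B}\rVert_F$.
\item Your conditioning argument gives the product probability.
\end{itemize}

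Finally, Step~1 requires $Q$ to be the full $(r_k+p)$-column basis of $A\Omega$, so the computed cores have TT-rank $r_k+p$. Algorithm~\ref{alg:TTRS} instead truncates to $Q(:,1:r_k)$, whose columns span only $A\Omega(:,1:r_k)$. The oversampling is then lost, and the bound is not available for that version.
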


	\begin{algorithm}[!h]
	\caption{TT-rSVD}
	\label{alg:TTRS}
	\renewcommand{\algorithmicrequire}{\textbf{Input:}}
	\renewcommand{\algorithmicensure}{\textbf{Output:}}
	\begin{algorithmic}[1]
		\REQUIRE Tensor $ \mathcal{A}\in\mathbb{R}^{n_1\times n_2\times\cdots\times n_d} $, target rank $ \mathbf{r} =(r_1,\cdots,r_{d-1}) $, oversampling parameter ${p>0} $, and $ r_0=1 $ 
		\ENSURE Cores $ \mathcal{G}_1,\cdots,\mathcal{G}_d $   
		
		\STATE  $ A :=$ {\tt reshape}$(\mathcal{A},[r_0 n_1,\frac{{\tt numel}(\mathcal{A})}{r_0 n_1}])$
		\FOR{$k = 1$ to $d-1$}
		
		\STATE Create random Gaussian matrix $ \Omega\in\mathbb{R}^{(n_{k+1}\cdot\cdot\cdot n_d)\times{(r_k+p)}} $
		\STATE Calculate $ Y=A\Omega $
		\STATE $ [Q,\backsim] = {\tt qr}(Y,0) $
		\STATE $ Q=Q(:,1:r_k) $
		\STATE New core $ \mathcal{G}_k=$ {\tt reshape}$ (Q,[r_{k-1},n_k,r_k])$
		\STATE Update $\mathcal{A}=\mathcal{A} \times_1 Q^\top$
		\ENDFOR
		\STATE $ \mathcal{G}_d=$reshape$ (\mathcal{A},[r_{d-1},n_d,r_d]) $
		\RETURN $ \mathcal{G}_1 ,\mathcal{G}_2 , \cdots , \mathcal{G}_d $
	\end{algorithmic}
\end{algorithm}

\section{A two-sided sketching algorithm for TT approximation}
	\label{section:3}
As depicted in Algorithm \ref{alg:R-SVD}, although R-SVD can effectively obtain low-rank approximations of matrices, it requires us to revisit the original data matrix. Assuming $A$ represents an enormous matrix, too large to be stored solely within the core memory. The expense associated with data transmission might be so substantial that we are constrained to loading the matrix into the core memory only once. Tropp et al. \cite{tropp2017} introduced a two-sided sketching algorithm for matrices, which can efficiently obtain low-rank approximations of matrices while reading the original data only once. 

Randomized algorithms have proven effective in swiftly obtaining low-rank approximations. However, their efficacy diminishes when faced with matrices exhibiting slow singular value decay, yielding suboptimal results. Power iteration can enhance the rate of decay of singular values, thereby reducing the tail energy and improving the accuracy. From \cite{tropp2017}, it can be inferred that the error of the two-sided sketching algorithm depends on the tail energy of the matrix. Dong et al. \cite{dong2023} proposed an algorithm that combines the subspace power iteration technique with two-sided sketching, i.e., sub-SKETCH, see Algorithm \ref{alg:sub}. When $A$ is dense, the arithmetic cost of Algorithm \ref{alg:sub} is $ \mathcal{O}((q+1)(k+l)mn+kl(m+n)) $ flops.
	
				\begin{algorithm}[!h]
					\caption{sub-SKETCH}
					\label{alg:sub}
					\renewcommand{\algorithmicrequire}{\textbf{Input:}}
					\renewcommand{\algorithmicensure}{\textbf{Output:}}
					\begin{algorithmic}[1]
							\REQUIRE Matrix ${A}\in\mathbb{R}^{m\times n} $,  sketch size parameters $ k,l${, oversampling parameter $ p>0 $}, and the power iteration $ q>0 $
							\ENSURE Rank-$k$ approximation $ \hat{{A}}=QX $ of ${A}$
							\STATE Create random Gaussian matrices $ \Omega\in\mathbb{R}^{n\times {(k+p)}} $
							and $ \Psi\in\mathbb{R}^{l\times m}$
							\STATE $ Y=A\Omega , W=\Psi A$
							\STATE $ [Q_0,\backsim] = {\tt qr}(Y,0)$
							\FOR{$j = 1$ to $q$}
							\STATE $ \hat{{Y}}_j = A^\top Q_{j-1} $
							\STATE $ (\hat{{Q}}_j,\sim) = {\tt qr}(\hat{{Y}}_j,0) $
							\STATE $ {Y}_j=A\hat{Q}_j $
							\STATE $ (\hat{{Q}}_j,\sim) = {\tt qr}({Y}_j,0) $
							\ENDFOR
							\STATE $ Q=Q_q(:,1:k)$
							\STATE $ X=(\Psi Q)\backslash W $
                            \RETURN $\hat{{A}}=QX$
						\end{algorithmic}
				\end{algorithm}
We first present the following theorem characterizing the error of Algorithm \ref{alg:sub} in the spectral norm. Rather than a probabilistic bound with a free confidence parameter $\delta$, we establish an expected error bound, which yields a cleaner constant that directly reflects the average-case performance of the algorithm.
	\begin{theorem}
		\label{Thm3}
		(Average spectral error for Algorithm \ref{alg:sub}). 
        For $A\in\mathbb{R}^{m\times n}$, assume that $ m>n $ and the sketch size parameter satisfies $l>k+1$. Draw random test matrices $ \Omega\in\mathbb{R}^{n\times k} $ and $ \Psi\in\mathbb{R}^{l\times m} $ independently of the standard normal distribution. The rank-$k$ approximation $ \hat{A} $ obtained from Algorithm \ref{alg:sub} satisfies
	\begin{equation}
	\begin{split}
		\mathbb{E}\lVert A-\hat{A}\rVert_2^2\leq & \Big [1+\frac{e^2l}{(l-k)^2} \big(\sqrt{l}+\sqrt{n-k}\big)^2 \Big] \\
		& \Big[1+\sqrt{\frac{k}{l-k-1}}+\frac{e\sqrt{l}}{l-k} \sqrt{n-k} \Big]^{2/(2q+1)}\sigma_{k+1}^2(A) \ .
	\end{split}
	\end{equation}
	\end{theorem}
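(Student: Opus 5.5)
We split the error into a \emph{range-finder} part and a \emph{co-range sketch} part, following Tropp et al.\ \cite{tropp2017}. Since $\hat A = Q X$ with $X=(\Psi Q)^\dagger W = (\Psi Q)^\dagger \Psi A$, we write
\[
A-\hat A = (I-QQ^\top)A + Q\big(Q^\top A - (\Psi Q)^\dagger \Psi A\big).
\]
The first term has range in $\mathrm{range}(Q)^\perp$ and the second in $\mathrm{range}(Q)$. Because $\Psi Q$ has full column rank almost surely (as $l>k$), $(\Psi Q)^\dagger\Psi Q = I$. Hence the second term equals $-Q(\Psi Q)^\dagger \Psi Q_\perp Q_\perp^\top A$, where $Q_\perp^\top$ spans the complement of $\mathrm{range}(Q)$. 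This gives
\[
\|A-\hat A\|_2^2 \le \|(I-QQ^\top)A\|_2^2 + \|(\Psi Q)^\dagger (\Psi Q_\perp)\|_2^2\,\|(I-QQ^\top)A\|_2^2 .
\]

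Condition on $\Omega$, so that $Q$ is fixed. By rotational invariance, $\Psi Q$ and $\Psi Q_\perp$ are independent standard Gaussian matrices of sizes $l\times k$ and $l\times(m-k)$. We then bound $\mathbb E\|(\Psi Q)^\dagger G\|_2^2$ for an independent Gaussian $G$ using the standard Gaussian facts from \cite{halko2011}:
\begin{itemize}
\item $\mathbb E\|SGT\|\le \|S\|\|T\|_F+\|S\|_F\|T\|$;
\item $(\mathbb E\|(\Psi Q)^\dagger\|_2^2)^{1/2}\lesssim e\sqrt{l}/(l-k)$ (valid since $l>k+1$).
\end{itemize}
Actually we only need $G$ acting on $Q_\perp^\top A$, whose effective row dimension is at most $n-k$ because $\mathrm{rank}(Q_\perp^\top A)\le n$ and the relevant part is bounded by $n-k$ after the $Q$ projection. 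This yields the factor $\frac{e^2 l}{(l-k)^2}(\sqrt l+\sqrt{n-k})^2$, which gives the first bracket in Theorem~\ref{Thm3} once we take the expectation over $\Psi$.

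Next we take the expectation over $\Omega$ of $\|(I-QQ^\top)A\|_2^2$, where $Q$ comes from $q$ steps of subspace iteration, i.e.\ $\mathrm{range}(Q)=\mathrm{range}((AA^\top)^qA\Omega)$. Set $B=(AA^\top)^qA$. By the power-scheme argument of Halko et al.\ (Prop.~8.6),
\[
\|(I-P_{B\Omega})A\|_2\le \|(I-P_{B\Omega})B\|_2^{1/(2q+1)} .
\]
Jensen then gives
\[
\mathbb E\|(I-QQ^\top)A\|_2^2\le \big(\mathbb E\|(I-P_{B\Omega})B\|_2\big)^{2/(2q+1)}\quad\text{when } 2/(2q+1)\le 1,
\]
which holds for $q\ge1$. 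We apply the average spectral bound for Gaussian range finders to $B$, whose singular values are $\sigma_j^{2q+1}$. This gives
\[
\mathbb E\|(I-P)B\|_2\le\Big(1+\sqrt{\tfrac{k}{l-k-1}}\Big)\sigma_{k+1}^{2q+1}+\frac{e\sqrt{l}}{l-k}\Big(\sum_{j>k}\sigma_j^{2(2q+1)}\Big)^{1/2},
\]
with the sketch width played by $l$ relative to $k$ as in the statement. We then bound the tail sum by $\sqrt{n-k}\,\sigma_{k+1}^{2q+1}$. Raising to the power $2/(2q+1)$ produces the second bracket times $\sigma_{k+1}^2$.

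Finally, since $\Psi$ and $\Omega$ are independent, the total expectation factorizes by the tower property. We get
\[
\mathbb E\|A-\hat A\|_2^2\le\big(1+\mathbb E_\Psi\|(\Psi Q)^\dagger\Psi Q_\perp\|_2^2\big)\,\mathbb E_\Omega\|(I-QQ^\top)A\|_2^2 .
\]

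The main obstacle is the $\Psi$-step. Bounding the \emph{second moment} of $\|(\Psi Q)^\dagger G\|_2$ (not just the first moment) requires the conditional argument $\mathbb E\|SGT\|_2^2$ with $S=(\Psi Q)^\dagger$ fixed, via Gaussian concentration or Gordon's inequality. Combined with $\mathbb E\|(\Psi Q)^\dagger\|^2=\frac{k}{l-k-1}$, which we may crudely absorb into $e^2l/(l-k)^2$, this has to produce exactly the stated constant, with the dimension $n-k$ rather than $m-k$. To get $n-k$, we would first write $Q_\perp^\top A$ via a thin SVD with at most $n-k$ nontrivial directions beyond $\mathrm{range}(Q)$ (using $m>n$), and let $G$ act only on that subspace.

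The matching of the sketch parameter in $\Omega$ (stated as $n\times k$) with the $l$ appearing in the second bracket also needs care. If necessary, we would take the oversampled width $k+p$ with $p$ identified so that the Halko constants read as stated.
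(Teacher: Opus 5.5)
Your route is the paper's route: the orthogonal split $\lVert A-\hat A\rVert_2^2\le\lVert(I-QQ^\top)A\rVert_2^2+\lVert X-Q^\top A\rVert_2^2$, Tropp's identity $X-Q^\top A=(\Psi Q)^\dagger\Psi Q_\perp Q_\perp^\top A$, independence of $\Psi Q$ and $\Psi Q_\perp$ given $\Omega$, Halko's Gaussian bounds for the first bracket, and the power-scheme bound for the second. You are more careful than the paper in two places:
\begin{itemize}
\item Jensen for the concave map $x\mapsto x^{2/(2q+1)}$ (for $q\ge1$) legitimately turns Halko's first-moment bound into a second-moment bound; the paper just squares first moments.
\item Restricting $G$ to $\mathrm{range}(Q_\perp^\top A)$, whose dimension is at most $n-k$ because $\mathrm{range}(Q)\subseteq\mathrm{range}(A)$, is what actually justifies $\sqrt{n-k}$ (the paper's $P$ should be $m\times(m-k)$).
\end{itemize}
But both obstacles you flag are genuine gaps, and your fixes do not close them.

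\emph{First bracket.} Given $S=(\Psi Q)^\dagger$, the $\mathbb E\lVert SGT\rVert$ bound plus Gaussian concentration give $\mathbb E_G\lVert SG\rVert_2^2\le\lVert S\rVert_2^2[(\sqrt k+\sqrt{n-k})^2+1]\le\lVert S\rVert_2^2(\sqrt l+\sqrt{n-k})^2$, using $l\ge k+2$. That part is fine. The problem is $\mathbb E\lVert S\rVert_2^2$:
\begin{itemize}
\item Halko's $e\sqrt l/(l-k)$ bounds only the first moment.
\item Integrating his tail bound gives $e^2l/((l-k)^2-1)$.
\item The Frobenius bound gives $k/(l-k-1)$.
\end{itemize}
Your absorption $k/(l-k-1)\le e^2l/(l-k)^2$ is false: at $l=2k$ the left side tends to $1$ while the right side is $2e^2/k$. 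The paper reaches $e^2l/(l-k)^2$ only by replacing $\mathbb E\lVert\cdot\rVert_2^2$ with $(\mathbb E\lVert\cdot\rVert_2)^2$, which is Jensen in the wrong direction.

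\emph{Second bracket.} This is the serious gap. The expression $1+\sqrt{k/(l-k-1)}+\frac{e\sqrt l}{l-k}\sqrt{n-k}$ is Halko's bound for a range finder with $l$ Gaussian test vectors whose basis keeps all $l$ columns. Your $\Psi$-step needs $Q$ of width $k$.
\begin{itemize}
\item With $\Omega\in\mathbb R^{n\times k}$ as stated, or with the truncation $Q=Q_q(:,1{:}k)$ of sub-SKETCH, $Q$ spans $(AA^\top)^qA\,\Omega(:,1{:}k)$, since QR is column-triangular. This is zero oversampling, where Halko's bounds do not apply.
\item Keeping all $l$ columns instead makes $\Psi Q$ square, with $\mathbb E\lVert(\Psi Q)^\dagger\rVert_2=\infty$.
\end{itemize}
So no identification of $p$ makes both steps work with one $l$.

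In fact the stated inequality is false in general. The lower bound $\mathbb E\lVert A-\hat A\rVert_2^2\ge\mathbb E\lVert(I-QQ^\top)A\rVert_2^2$ does not depend on $l$, while the right side tends to $(1+e^2)\sigma_{k+1}^2$ as $l\to\infty$. Take $k=q=1$, $A=U\Sigma V^\top$ with $\sigma_1=1$, $\sigma_2=\dots=\sigma_n=\sigma\le0.3$ and $\sigma^6(n-1)\ge100$, and write $V^\top\Omega=(g,\omega_2^\top)^\top$. The angle $\theta$ between $Q$ and $u_1$ satisfies $\tan^2\theta=\sigma^6\lVert\omega_2\rVert_2^2/g^2$, so $\mathbb E\lVert(I-QQ^\top)A\rVert_2^2\ge\mathbb E\sin^2\theta\approx0.99>(1+e^2)\sigma^2$.

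The paper's proof makes exactly this substitution of $l$ for $k+p$ in Halko's Corollary~10.10, so it does not supply the missing step. What your argument does prove, with $Q$ the full $k$-column basis, a comparison rank $\varrho\le k-2$, and $q\ge1$, is
\[
\mathbb E\lVert A-\hat A\rVert_2^2\le\Bigl[1+\tfrac{e^2l}{(l-k)^2-1}\bigl(\sqrt l+\sqrt{n-k}\bigr)^2\Bigr]\Bigl[1+\sqrt{\tfrac{\varrho}{k-\varrho-1}}+\tfrac{e\sqrt k}{k-\varrho}\sqrt{n-\varrho}\Bigr]^{2/(2q+1)}\sigma_{\varrho+1}^2(A).
\]
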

    
\begin{proof}
We decompose the approximation error into two distinct components via the triangle inequality, i.e.,
	\begin{equation}
		\label{10}
		\begin{split}
			\lVert A-\hat{A}\rVert_2^2= &\lVert (A-QQ^\top A) + (QQ^\top A - QX)\rVert_2^2\\
			&\leq\lVert A-QQ^\top A\rVert_2^2+\lVert X-Q^\top A\rVert_2^2 \ .
		\end{split}
\end{equation}
Using $Q$ computed from Algorithm \ref{alg:sub}, construct a matrix $ P\in\mathbb{R}^{n\times (n-k)} $ with orthonormal columns satisfying
	\begin{equation}
		\label{11}
		PP^\top = I-QQ^\top.
	\end{equation}
Define matrices
\begin{equation}
	\Psi_1:=\Psi P\in\mathbb{R}^{l\times (n-k)}, \quad \Psi_2:=\Psi Q\in\mathbb{R}^{l\times k}.
\end{equation}
From \cref{Lem8}, we have 
\begin{equation}
	X-Q^\top A = \Psi_2^{\dagger}\Psi_1(P^\top A)\ .
\end{equation}
Therefore,
\begin{equation}
	\label{14}
	\begin{split}
	    \mathbb{E}\lVert X-Q^\top A\rVert_2^2 = & \mathbb{E}\lVert \Psi_2^{\dagger}\Psi_1(P^\top A)\rVert_2^2 \\
	    \leq & \mathbb{E}\lVert \Psi_2^{\dagger}\rVert_2^2 \cdot \mathbb{E}\lVert \Psi_1\rVert_2^2 \cdot \mathbb{E}\lVert P^\top A\rVert_2^2 \ .
    \end{split}
\end{equation}
Since $ \Psi_1 $ and $ \Psi_2 $ are Gaussian random matrices, by  \cite{halko2011} (Proposition A.4), we have
\begin{equation}
	\label{15}
	\mathbb{E}\lVert\Psi_2^{\dagger}\rVert_2<\frac{e\sqrt{l}}{l-k} \ .
\end{equation}
Furthermore, since $ P $ and $ Q $ have mutually orthogonal column spaces (i.e., $ P^\top Q = 0 $), the Gaussian random matrices $ \Psi_1 = \Psi P $ and $ \Psi_2 = \Psi Q $ are statistically independent. Following \cite{vershynin2010}(Theorem 5.32), the expected spectral norm of $ \Psi_1 \in \mathbb{R}^{l \times (n-k)} $ is bounded by
	\begin{equation}
		\label{16_new}
		\mathbb{E}\lVert\Psi_1\rVert_2 \leq \sqrt{n-k}+\sqrt{l} \ .
\end{equation}

Combining (\ref{11}), (\ref{14}), (\ref{15}) and (\ref{16_new}), and utilizing the independence of $\Psi_1$ and $\Psi_2$, it can be shown that
\begin{equation}
	\begin{split}
		\mathbb{E}\lVert X-Q^\top A\rVert_2^2\leq & \frac{e^2l}{(l-k)^2} \big (\sqrt{l}+\sqrt{n-k} \big)^2 \mathbb{E}\lVert P^\top A\rVert_2^2 \\
		\leq& \frac{e^2l}{(l-k)^2} \big (\sqrt{l}+\sqrt{n-k} \big)^2 \mathbb{E}\lVert A-QQ^\top A\rVert_2^2 \\
		\leq& \frac{e^2l}{(l-k)^2} \big(\sqrt{l}+\sqrt{n-k} \big)^2 \\
		& \Big[1+\sqrt{\frac{k}{l-k-1}}+\frac{e\sqrt{l}}{l-k}\cdot \sqrt{n-k} \Big]^{2/(2q+1)} \sigma_{k+1}^2(A) \ .
	\end{split}
\end{equation}
The last inequality holds due to the average spectral error for the power scheme \cite{halko2011} (Corollary 10.10). Combining this result with Eq. (\ref{10}), we complete the proof.
\end{proof}

\subsection{Subspace power iteration for TT approximation}

By integrating the sub-SKETCH (Algorithm \ref{alg:sub}) with the TT-rSVD (Algorithm \ref{alg:TTRS}), we propose an effective sketch TT algorithm, referred to as TT-subSKETCH. The detailed procedure of this algorithm is delineated in Algorithm \ref{alg:TTPS}, accompanied by its error analysis in Theorem \ref{Thm4}.

\begin{algorithm}[!h]
	\caption{TT-subSKETCH}
	\label{alg:TTPS}
	\renewcommand{\algorithmicrequire}{\textbf{Input:}}
	\renewcommand{\algorithmicensure}{\textbf{Output:}}
	\begin{minipage}{\linewidth}
		\small
		\begin{algorithmic}[1]
			\REQUIRE Tensor $\mathcal{A}\in\mathbb{R}^{n_1\times\cdots\times n_d}$, 
			target rank $\mathbf{r}=(r_1,\dots,r_{d-1})$, 
			sketch size $\mathbf{l}=(l_1,\dots,l_{d-1})$, 
			$r_0=1$, oversampling $p>0$, power iteration $q\ge 0$
			\ENSURE Cores $\mathcal{G}_1,\dots,\mathcal{G}_d$ of the TT-approximation
			
			\STATE $A^{(1)}:=\texttt{reshape}(\mathcal{A},[r_0 n_1,\frac{\texttt{numel}(\mathcal{A})}{r_0 n_1}])$
			\FOR{$k = 1$ to $d-1$}
			\STATE Create random Gaussian matrices:
			\STATE \quad $\Omega^{(k)}\in\mathbb{R}^{(n_{k+1}\cdots n_d)\times (r_k+p)}$
			\STATE \quad $\Psi^{(k)}\in\mathbb{R}^{l_k\times (r_{k-1} n_k)}$
			\STATE $Y^{(k)}=A^{(k)}\Omega^{(k)},\quad W^{(k)}=\Psi^{(k)} A^{(k)}$
			\STATE $[Q_0^{(k)},\cdot]=\texttt{qr}(Y^{(k)},0)$
			\FOR{$j = 1$ to $q$}
			\STATE $\hat{Y}_j^{(k)} = (A^{(k)})^\top Q_{j-1}^{(k)}$
			\STATE $(\hat{Q}_j^{(k)},\cdot)=\texttt{qr}(\hat{Y}_j^{(k)},0)$
			\STATE $Y_j^{(k)} = A^{(k)} \hat{Q}_j^{(k)}$
			\STATE $(Q_j^{(k)},\cdot)=\texttt{qr}(Y_j^{(k)},0)$
			\ENDFOR
			\STATE $Q^{(k)}=Q^{(k)}_q(:,1:r_k)$
			\STATE $X^{(k)}=(\Psi^{(k)} Q^{(k)})\backslash W^{(k)}$
			\STATE $\mathcal{G}_k=\texttt{reshape}(Q^{(k)},[r_{k-1},n_k,r_k])$
			\STATE $A^{(k+1)}=\texttt{reshape}(X^{(k)},[r_k n_{k+1},\frac{\texttt{numel}(X^{(k)})}{r_k n_{k+1}}])$
			\ENDFOR
			\STATE $\mathcal{G}_d=X^{(d-1)}$
			\RETURN $\mathcal{G}_1,\dots,\mathcal{G}_d$
		\end{algorithmic}
	\end{minipage}
\end{algorithm}

\textit{Remark}. When \( q = 0 \), i.e., without power iteration, our TT-subSKETCH algorithm reduces to a one-pass sketching algorithm, which only accesses the original data once and thus avoids excessive data transmission costs.


\begin{theorem}\label{Thm4}
	Let $ \hat{\mathcal{A}} $ be the TT approximation of a tensor $ \mathcal{A}\in\mathbb{R}^{n_1\times n_2\times\cdots\times n_d} $ by TT-subSKETCH (i.e., Algorithm \ref{alg:TTPS}) with the target TT-rank $ \mathbf{r}=(r_1,\cdots,r_{d-1}) $, $ (r_0=r_d=1) $, sketch size $ \mathbf{l}=(l_1,\cdots,l_{d-1})$, and the power iteration q satisfied
	\begin{equation}
		\begin{split}
			\mathbb{E}\lVert\mathcal{A}-\hat{\mathcal{A}}\rVert_2^2\leq 
			& \sum_{k=1}^{d-1} \Big[1+\frac{e^2l_k}{(l_k-r_k)^2} \big(\sqrt{l_k}+\sqrt{\min(r_k n_{k+1},n_{k+2}\cdots n_d)-r_k} )^2\Big]\\ 
			&\Big[1+\sqrt{\frac{r_k}{l_k-r_k-1}} + \frac{e\sqrt{l_k}}{l_k-r_k}\sqrt{\min(r_k n_{k+1},n_{k+2}\cdots n_d)-r_k} \Big]^{2/(2q+1)}
			\\&\sigma_{r_k+1}^2(A^{(k)}) .
		\end{split}
	\end{equation}
\end{theorem}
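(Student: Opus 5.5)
The plan is to reduce the tensor error to a sum of per-step matrix errors and then apply Theorem~\ref{Thm3} to each step. Write $\hat{\mathcal{A}} = \mathcal{G}_1\times_3^1\cdots\times_3^1\mathcal{G}_d$. The cores $\mathcal{G}_1,\dots,\mathcal{G}_{d-1}$ come from the orthonormal matrices $Q^{(k)}$, and $\mathcal{G}_d = X^{(d-1)}$.

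\textbf{Step 1: telescoping decomposition.} For $k=1,\dots,d-1$, let $\hat{\mathcal{A}}^{(k)}$ denote the tensor obtained by stopping the algorithm after step $k$ and using $X^{(k)}$ itself, reshaped, as the final core. Then $\hat{\mathcal{A}}^{(d-1)}=\hat{\mathcal{A}}$. Also set $\hat{\mathcal{A}}^{(0)}=\mathcal{A}$. The difference $\hat{\mathcal{A}}^{(k-1)}-\hat{\mathcal{A}}^{(k)}$ equals the product of the orthonormal interface $Q^{(1)},\dots,Q^{(k-1)}$, suitably Kronecker-extended, with the matrix $A^{(k)} - Q^{(k)}X^{(k)}$. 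Because the left interface matrix $(I\otimes Q^{(k-1)})\cdots$ has orthonormal columns, multiplying by it preserves the spectral norm of the corresponding unfolding. Hence
\[
\lVert \mathcal{A}-\hat{\mathcal{A}}\rVert_2 \le \sum_{k=1}^{d-1}\lVert A^{(k)}-Q^{(k)}X^{(k)}\rVert_2 .
\]
Here $\lVert\cdot\rVert_2$ of a tensor is read, as in Theorem~\ref{Thm2}, through the relevant unfoldings. To reach a bound on the squares that is a plain sum, I would instead use the Pythagorean-type argument from \eqref{eq11} and Theorem~\ref{Thm1}. The step-$k$ error $A^{(k)}-Q^{(k)}Q^{(k)\top}A^{(k)}$ lies in the range of $Q^{(k)}_\perp$. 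The later errors are built from $X^{(k)}$, which is embedded through $Q^{(k)}$. These components are orthogonal, except for the cross term from $X^{(k)}-Q^{(k)\top}A^{(k)}$, which I would absorb into the $k$-th term exactly as in \eqref{10}. The result is the target inequality
\[
\lVert\mathcal{A}-\hat{\mathcal{A}}\rVert_2^2\le\sum_{k=1}^{d-1}\lVert A^{(k)}-Q^{(k)}X^{(k)}\rVert_2^2 .
\]

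\textbf{Step 2: conditioning and applying Theorem~\ref{Thm3}.} Conditional on all randomness from steps $1,\dots,k-1$, the matrix $A^{(k)}$ is fixed. It has size $r_{k-1}n_k\times n_{k+1}\cdots n_d$. The fresh Gaussian matrices $\Omega^{(k)}$ and $\Psi^{(k)}$ are independent of the past. Theorem~\ref{Thm3} applies with $k\to r_k$ and $l\to l_k$. The dimension parameter ``$n$'' in Theorem~\ref{Thm3} enters only through $\Psi_1=\Psi P$, whose column count equals the dimension of the complement of $Q^{(k)}$ inside the relevant space. I would bound this effectively by the rank of $A^{(k)}$, giving $\min(r_kn_{k+1},n_{k+2}\cdots n_d)-r_k$ as written in the statement; P can be restricted to the range of $A^{(k)}$ since components outside it are annihilated. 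Taking total expectation via the tower property then gives the stated bound, with $\sigma_{r_k+1}(A^{(k)})$ left as a (random) quantity inside the expectation.

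\textbf{Main obstacle.} The hardest part is Step 1: justifying the clean sum of squared errors in the spectral norm. Orthogonality arguments give Pythagoras for the Frobenius norm, but in the spectral norm they only give $\lVert\sum E_k\rVert_2^2\le\sum\lVert E_k\rVert_2^2$ when the $E_k$ have mutually orthogonal column or row spaces. I would therefore first verify that the errors $A^{(k)}-Q^{(k)}Q^{(k)\top}A^{(k)}$, lifted to a common unfolding, have orthogonal ranges. If they do not, I would fall back on the triangle inequality, which yields the same form up to a factor $(d-1)$. The second delicate point is the dimension count in Step 2, matching $\min(r_kn_{k+1},\dots)$ to the paper's indexing.
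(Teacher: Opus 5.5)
You correctly flag the passage to a plain sum of squares as the main obstacle, but the proposed resolution does not work. That step, $\lVert\mathcal{A}-\hat{\mathcal{A}}\rVert_2^2\le\sum_{k}\lVert A^{(k)}-Q^{(k)}X^{(k)}\rVert_2^2$, is the genuine gap. Put $D_k:=(I-Q^{(k)}Q^{(k)\top})A^{(k)}$ and $F_k:=X^{(k)}-Q^{(k)\top}A^{(k)}=(\Psi^{(k)}Q^{(k)})^{\dagger}\Psi^{(k)}D_k$ (Lemma \ref{Lem8}). Let $G_k:=X^{(k)}-\hat{X}^{(k)}$ be the error produced by all later steps. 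Then
\[
A^{(k)}-Q^{(k)}\hat{X}^{(k)}=D_k-Q^{(k)}F_k+Q^{(k)}G_k .
\]
The device of \eqref{10} only separates $D_k$, whose columns lie in the range of $Q^{(k)}_\perp$, from the rest. It yields $\lVert A^{(k)}-Q^{(k)}\hat{X}^{(k)}\rVert_2^2\le\lVert D_k\rVert_2^2+\lVert G_k-F_k\rVert_2^2$, and nothing more.

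The two remaining pieces both live in the range of $Q^{(k)}$, and nothing makes them orthogonal. Worse, $G_k$ is computed from $X^{(k)}=Q^{(k)\top}A^{(k)}+F_k$, so it depends nonlinearly on $F_k$. Hence no orthogonality is available, pathwise or in expectation; the conditional mean-zero property of $F_k$ does not help. Theorem \ref{Thm1} gets its Pythagorean sum precisely because TT-SVD (like TT-rSVD) sets $X^{(k)}=Q^{(k)\top}A^{(k)}$, i.e.\ $F_k=0$, and the second sketch $\Psi^{(k)}$ destroys exactly this. Without a new decoupling idea you are left with one of two options. One is $(a+b)^2\le(1+\epsilon)a^2+(1+\epsilon^{-1})b^2$, whose constants compound over the $d-1$ levels. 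The other is your fallback (triangle inequality plus Cauchy--Schwarz), which gives the stated sum multiplied by $d-1$. Either way the statement as written is not reached.

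For comparison, the paper's proof is essentially your first display taken in expectation, \eqref{equation:12}, followed directly by an appeal to Theorem \ref{Thm3}. It does not justify the passage to a plain sum of second-moment bounds either, so what that route rigorously delivers is your fallback with the factor $d-1$.

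Three smaller points. First, your telescoping is more careful than the paper's. The paper equates $\lVert X^{(k)}-\hat{X}^{(k)}\rVert_2$ with the spectral norm of its reshape $A^{(k+1)}-Q^{(k+1)}\hat{X}^{(k+1)}$, which is not valid in general, since reshaping is not a spectral isometry. However, to add pieces measured in different unfoldings you must fix one tensor norm dominated by the spectral norm of every unfolding (e.g.\ the tensor spectral norm), not $\lVert A_{[1]}\rVert_2$. Second, restricting $P$ to the range of $D_k$ is legitimate, but since $A^{(k)}\in\mathbb{R}^{r_{k-1}n_k\times n_{k+1}\cdots n_d}$, it bounds the column count of $\Psi_1$ by $\mathrm{rank}(A^{(k)})-r_k\le\min(r_{k-1}n_k,n_{k+1}\cdots n_d)-r_k$. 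The same quantity enters the power-scheme factor. The statement's $\min(r_kn_{k+1},n_{k+2}\cdots n_d)$ is the size of $A^{(k+1)}$, an index shift that neither your argument nor the paper's produces. Third, keeping $\sigma_{r_k+1}^2(A^{(k)})$ inside the expectation via the tower property is the correct reading, since $A^{(k)}$ is random for $k\ge 2$.
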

\begin{proof}
    {Since $ X^{(k)} (k\in [1,\cdots,d-1]) $ is  naturally associated with $A^{(k+1)}$, it will continue to decompose further in the Algorithm \ref{alg:TTPS}, which means that $X^{(k)}$ is approximated by some matrix $ \hat{X}^{(k)} $. Let $ \hat{A}^{(k)}=Q^{(k)}\hat{X}^{(k)} $ be an approximated matrix of $ A^{(k)} $. } 
\begin{equation}
	\label{equation:10}
	\begin{split}
		\mathbb{E}\lVert\mathcal{A}-\hat{\mathcal{A}}\rVert_2 = &\mathbb{E}\lVert A^{(1)}-\hat{A}^{(1)}\rVert_2 \\
        = & \mathbb{E}\lVert A^{(1)}-Q^{(1)}\hat{X}^{(1)}\rVert_2 \\
		=&\mathbb{E}\lVert A^{(1)}-Q^{(1)}X^{(1)}+Q^{(1)}(X^{(1)}-\hat{X}^{(1)})\rVert_2 \\
		\leq&\mathbb{E}\lVert A^{(1)}-Q^{(1)}X^{(1)}\rVert_2 + \mathbb{E}\lVert Q^{(1)}(X^{(1)}-\hat{X}^{(1)})\rVert_2 \\
		=&\mathbb{E}\lVert A^{(1)}-Q^{(1)}X^{(1)}\rVert_2 + \mathbb{E}\lVert X^{(1)}-\hat{X}^{(1)}\rVert_2 \ .
	\end{split}
\end{equation}
The last relation holds due to the unitary invariance of the {spectral-norm}, since $ Q^{(k)}, k\in [1,\cdots,d-1]$, has orthonormal columns. Moreover, $ \forall k\in [1,\cdots,d-1] $, there holds
\begin{equation}
	\label{equation:11}
	\begin{split}
		\mathbb{E}\lVert X^{(k)}-\hat{X}^{(k)}\rVert_2 = 
        & \mathbb{E}\lVert A^{(k+1)}-Q^{(k+1)}\hat{X}^{(k+1)}\rVert_2 \\
		\leq & \mathbb{E}\lVert A^{(k+1)}-Q^{(k+1)}X^{(k+1)}\rVert_2 + \mathbb{E}\lVert X^{(k+1)}-\hat{X}^{(k+1)}\rVert_2 \ .
	\end{split}
\end{equation}
Inserting this iteratively into (\ref{equation:10}) gives
\begin{equation}
	\label{equation:12}
	\begin{split}
		\mathbb{E}\lVert\mathcal{A}-\hat{\mathcal{A}}\rVert_2\leq
		\sum_{k=1}^{d-1}\mathbb{E}\lVert {A^{(k)}-\hat{A}^{(k)}}\rVert_2 \ .
	\end{split}
\end{equation}
Combining with \cref{Thm3}, we complete the proof.
\end{proof}

Furthermore, the computational cost of TT-subSKETCH is dominated by the two-sided sketching and the $q$ subspace power iterations at the first unfolding step. Assuming $n_i \approx n$ and $r_i \approx r$, the overall arithmetic cost scales as $\mathcal{O}((q+1)r^2 n^d)$. Compared to the $\mathcal{O}(n^{d+1})$ complexity of the deterministic TT-SVD, our algorithm provides a substantial theoretical speedup since $(q+1)r^2 \ll n$ strictly holds in typical low-rank scenarios. This theoretically justifies the dramatic reduction in CPU time observed in our numerical experiments.

\subsection{Robustness analysis}

In practical applications, we may only have access to data that are corrupted with noise. Unlike the noise-free setting in Theorems \ref{Thm3} and \ref{Thm4}, where the randomness stems solely from the Gaussian sketch matrices $\Omega^{(k)}$ and $\Psi^{(k)}$ with known distributional properties, the noise matrices $Z^{(k)}$ and $\widetilde{Z}^{(k)}$ here are treated as arbitrary deterministic perturbations with unknown distributions. Consequently, an expected error bound is no longer attainable, and the approximation error must instead be controlled in a high-probability sense with respect to the randomness of the sketch matrices alone. In this scenario, Ma et al.\ \cite{ma2023} conducted a robustness analysis for the recovery of low-rank matrices and low-tubal-rank tensors from noisy sketches. We generalize the result from t-product to TT format in Theorem \ref{Thm5} below.

\begin{theorem}
    \label{Thm5}
Let $ \mathcal{A}\in\mathbb{R}^{n_1\times n_2\times\cdots\times n_d} $ be an initial tensor. For each k we have $ r_k<l_k $, and consider the sketches in Algorithm \ref{alg:TTPS}, i.e.,
	\begin{equation}
		 Y^{(k)}=A^{(k)}\Omega^{(k)} +Z^{(k)},  \ \ W^{(k)}=\Psi^{(k)} A^{(k)}+\widetilde{Z}^{(k)}, 
	\end{equation}
where $ \Psi^{(k)}\in\mathbb{R}^{l_k\times (r_{k-1} n_k)} $ and $ \Omega^{(k)}\in\mathbb{R}^{(n_{k+1}\cdots n_d)\times r_k} $ are independent Gaussian random matrices, and $ Z^{(k)} $ and $ \widetilde{Z}^{(k)} $ are noise matrices originated from k sketching. 
For any $ \delta_1,\delta_2,\delta_3>0 $, with probability at least $ 1-\delta_1-\delta_2-\delta_3 $, the TT approximation $ \hat{\mathcal{A}} $ obtained by TT-subSKETCH (i.e., Algorithm \ref{alg:TTPS}) satisfies
	\begin{equation}
		\begin{split}
        \lVert \mathcal{A}-\hat{\mathcal{A}}\rVert_2 
        \leq & \sum_{k=1}^{d-1}\Big[\frac{r_k\sqrt{(r_{k-1}n_k-r_k)}\lVert Z^{(k)}\lVert_2}{\sqrt{2\delta_1 \log(1/(1-\delta_2))+\delta_1}-\sqrt{\delta_1}}\\
        & + \frac{\lVert (\widetilde{Z}^{(k)})\lVert_2}{\sqrt{l_k}-\sqrt{r_k}-\sqrt{2\log(2/\delta_3)}}\Big] \ .
		\end{split}
	\end{equation}
\end{theorem}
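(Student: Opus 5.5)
We will follow the proof of Theorem \ref{Thm4} and reduce to a per-unfolding matrix statement, which is the TT analogue of the noisy-sketch bound of \cite{ma2023}. Exactly as in (\ref{equation:10})--(\ref{equation:12}), and now without expectations, we write $\hat{A}^{(k)}=Q^{(k)}\hat{X}^{(k)}$. The triangle inequality and the orthonormality of the columns of $Q^{(k)}$ give the deterministic telescoping bound
\begin{equation*}
\lVert \mathcal{A}-\hat{\mathcal{A}}\rVert_2\le \sum_{k=1}^{d-1}\lVert A^{(k)}-Q^{(k)}X^{(k)}\rVert_2 .
\end{equation*}
It therefore suffices to show, for each $k$, that $\lVert A^{(k)}-Q^{(k)}X^{(k)}\rVert_2$ is bounded by the $k$-th summand on an event whose failure probability is controlled by $\delta_1,\delta_2,\delta_3$. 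To get the stated confidence with no factor $d-1$, we interpret the $\delta_i$ as applying to the events defined uniformly in $k$. This matches the paper's usage, and a union bound could be inserted if needed.

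For a fixed $k$ we drop the superscript and split the error as in (\ref{10}):
\begin{equation*}
A-QX=(I-QQ^\top)A+Q(Q^\top A-X).
\end{equation*}
We handle $X$ first. Since $X=(\Psi Q)^\dagger W=(\Psi Q)^\dagger(\Psi A+\widetilde Z)$, we follow \cref{Lem8} and set $P=Q_\perp$, $\Psi_1=\Psi P$, $\Psi_2=\Psi Q$. This gives
\begin{equation*}
X-Q^\top A=\Psi_2^\dagger\Psi_1P^\top A+\Psi_2^\dagger\widetilde Z .
\end{equation*}
The noise part is bounded by $\lVert\widetilde Z\rVert_2/\sigma_{\min}(\Psi_2)$. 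Because $\Psi_2$ is an $l_k\times r_k$ Gaussian matrix, the Davidson--Szarek/Vershynin lower tail bound gives $\sigma_{\min}(\Psi_2)\ge\sqrt{l_k}-\sqrt{r_k}-t$ with probability at least $1-e^{-t^2/2}$. Choosing $t=\sqrt{2\log(2/\delta_3)}$ yields the second term with failure probability at most $\delta_3$.

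The remaining signal terms $(I-QQ^\top)A$ and $\Psi_2^\dagger\Psi_1P^\top A$ should be handled in the regime underlying \cite{ma2023}. There $A^{(k)}$ is treated as exactly rank $r_k$, or its tail is absorbed, so that the only obstruction to $Q$ capturing $\operatorname{range}(A)$ is the sketch noise $Z$. In that case $P^\top A$ is controlled by the perturbation of the range of $Y=A\Omega+Z$.

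The first term is the main obstacle. We need to bound $\lVert P^\top A\rVert_2$, where $Q$ comes from $\texttt{qr}(A\Omega+Z)$, followed by power steps that only shrink the error. The plan is a Wedin-type argument. Write $A\Omega=U\Sigma V^\top\Omega$. Then
\begin{equation*}
\lVert (I-QQ^\top)A\rVert_2\le \lVert Z\rVert_2\,\lVert (V^\top\Omega)^{-1}\rVert_2\cdot(\text{dimension factor}),
\end{equation*}
with $V^\top\Omega$ an $r_k\times r_k$ Gaussian. The factor $r_k\sqrt{r_{k-1}n_k-r_k}$ should come from bounding $\lVert\cdot\rVert_2$ by a Frobenius norm over the $r_{k-1}n_k-r_k$ complementary directions, combined with an entrywise bound on the $r_k$ columns. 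We then need a small-ball lower bound on $\sigma_{\min}(V^\top\Omega)$. We would derive it from the joint tail of the Gaussian square matrix: a chi-square/Gaussian anti-concentration estimate gives a denominator of the form $\sqrt{2\delta_1\log(1/(1-\delta_2))+\delta_1}-\sqrt{\delta_1}$, holding except on events of probability $\delta_1$ and $\delta_2$. Matching this exact constant is where we expect the most bookkeeping.

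Finally, $\Psi_2^\dagger\Psi_1P^\top A$ is dominated by the same $\lVert P^\top A\rVert_2$ times a constant factor, which is absorbed. Summing over $k$ then completes the argument.
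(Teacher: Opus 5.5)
\textbf{There is a genuine gap: the cross term is not a constant factor.} Your reduction to a per-unfolding bound matches the paper, including reading $\delta_1,\delta_2,\delta_3$ uniformly in $k$. So does your treatment of the $\widetilde Z^{(k)}$ term via $\lVert\widetilde Z\rVert_2/\sigma_{\min}(\Psi Q)$ and the Gaussian lower-tail bound. The gap is in the signal part, where you say $\Psi_2^\dagger\Psi_1P^\top A$ is ``dominated by the same $\lVert P^\top A\rVert_2$ times a constant factor, which is absorbed.''

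Since $\Psi_2^\dagger\Psi_2=I$, your two signal terms combine into $(I-Q\Psi_2^\dagger\Psi)A=(I-Q\Psi_2^\dagger\Psi)(I-QQ^\top)A$. The oblique projector $I-Q\Psi_2^\dagger\Psi$ has spectral norm $\sqrt{1+\lVert\Psi_2^\dagger\Psi_1\rVert_2^2}$. This is a random quantity governed by $\lVert\Psi_1\rVert_2/\sigma_{\min}(\Psi_2)$. It grows with $r_{k-1}n_k-r_k$ and is heavy-tailed when $l_k$ is close to $r_k$. It therefore needs its own tail estimate, and that estimate is exactly where $\delta_1$ and the factor $\sqrt{r_k(r_{k-1}n_k-r_k)}$ in the statement come from.

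\textbf{Consequences for your account of the constant.}
\begin{itemize}
\item The orthogonal part needs no dimension factor in the exact-rank regime. The identity $(I-QQ^\top)Y=0$ gives $\lVert(I-QQ^\top)A\rVert_2\le\lVert Z\rVert_2/\sigma_{\min}(V^\top\Omega)$ directly.
\item No single anti-concentration estimate for the $r_k\times r_k$ Gaussian $V^\top\Omega$ produces the stated denominator. It factors as $\sqrt{\delta_1}\bigl(\sqrt{2\log(1/(1-\delta_2))+1}-1\bigr)$, and the two factors come from two different random objects.
\end{itemize}

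\textbf{How the paper supplies the missing piece.} It writes the signal error as $P^{(k)}A^{(k)}$ with $P^{(k)}=I-U_Y(\Psi U_Y)^\dagger\Psi$. By Lemma \ref{Lem3} it represents $P^{(k)}=S(TS)^{-1}T$ with $T=U_{Y,\perp}^\top$, and it uses $TY=0$ to get $\lVert TA\rVert_2\le\lVert Z\rVert_2/\sigma_{\min}(V_A^\top\Omega)$. Note that $1/\sigma_{\min}(TS)$ is precisely the oblique-projector norm above. The paper then proves two bounds:
\begin{itemize}
\item $\sigma_{\min}(TS)\ge\sqrt{\delta_1}/\sqrt{r_k(r_{k-1}n_k-r_k)}$ with probability $1-\delta_1$, from the Haar-corner estimate (Lemma \ref{Lem4});
\item $\sigma_{\min}(V_A^\top\Omega)\ge\bigl(\sqrt{2\log(1/(1-\delta_2))+1}-1\bigr)/\sqrt{r_k}$ with probability $1-\delta_2$, from Lemma \ref{Lem5}.
\end{itemize}
Their product gives the first summand exactly. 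Without a tail bound on $\sigma_{\min}(TS)$, or equivalently on $\lVert\Psi_2^\dagger\Psi_1\rVert_2$, charged to $\delta_1$, your argument does not reach the stated inequality.

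\textbf{A smaller point.} The identity $(I-QQ^\top)Y=0$ (the paper's $TY=0$) requires $Q$ to span $\mathrm{range}(Y)$. The paper's argument works directly with $\hat A^{(k)}=Y^{(k)}(\Psi^{(k)}Y^{(k)})^\dagger W^{(k)}$ for this reason. Your assertion that the power steps ``only shrink the error'' is unjustified, and the power steps would break that identity.
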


\begin{proof}
From Theorem \ref{Thm4}, we know that the TT approximation error originates from the sketching error of the auxiliary unfolding matrices. Therefore, we first consider the
$k-th$ sketching of the unfolding matrix, i.e.,
    \begin{align}
    \begin{split}
    \label{eq24}
    \hat{A}^{(k)}&=Y^{(k)}(\Psi^{(k)}Y^{(k)})\backslash W^{(k)} \\
    &=\widetilde{A}^{(k)}+Y^{(k)}(\Psi^{(k)}Y^{(k)})^{\dagger}(\widetilde{Z}^{(k)}) \ ,
    \end{split}
    \end{align}
where $ \widetilde{A}^{(k)}:=Y^{(k)}(\Psi^{(k)}Y^{(k)})^{\dagger}(\Psi^{(k)}A^{(k)}) $. Denote the SVD of $ Y^{(k)} $ as 
    \begin{align}
        Y^{(k)}=U_Y^{(k)}\Sigma_Y^{(k)}{V_Y^{(k)}}^\top, 
    \end{align}
where $ U_Y^{(k)}\in\mathbb{R}^{(r_{k-1}n_k)\times r_k}, \Sigma_Y^{(k)}\in\mathbb{R}^{r_k\times r_k}$, and ${V_Y^{(k)}}^\top \in\mathbb{R}^{r_k\times r_k}. $ Since $ \Psi^{(k)}U_Y^{(k)} $ has linearly independent columns \cite{ma2023}, 
    \begin{align}
    \begin{split}
        Y^{(k)}(\Psi^{(k)}Y^{(k)})^{\dagger}&=U_Y\Sigma_YV_Y^\top (\Psi^{(k)}U_Y\Sigma_YV_Y^\top )^{\dagger}\\
        & = U_Y^{(k)}\Sigma_Y^{(k)}{V_Y^{(k)}}^\top V_Y^{(k)}{\Sigma_Y^{(k)}}^{-1}(\Psi^{(k)}U_Y^{(k)})^{\dagger}\\
        &=U_Y^{(k)}(\Psi^{(k)}U_Y^{(k)})^{\dagger} \ .
    \end{split}
    \end{align}
Thus, Eq. \eqref{eq24} can be simplified to
    \begin{align}
        \hat{A}^{(k)}=\widetilde{A}^{(k)}+U_Y^{(k)}(\Psi^{(k)}U_Y^{(k)})^{\dagger}(\widetilde{Z}^{(k)}) \ .
    \end{align} 
Our goal is to bound the approximation error, i.e.,
    \begin{align}
    \begin{split}
    \label{eq28}
        \lVert \hat{A}^{(k)}-A^{(k)}\lVert_2&=\lVert\widetilde{A}^{(k)}-A+U_Y^{(k)}(\Psi^{(k)}U_Y^{(k)})^{\dagger}(\widetilde{Z}^{(k)})\lVert_2\\
        &\leq \lVert \widetilde{A}^{(k)}-A\lVert_2+\lVert U_Y^{(k)}(\Psi^{(k)}U_Y^{(k)})^{\dagger}(\widetilde{Z}^{(k)})\lVert_2.
    \end{split}
    \end{align}
For the first term, let $P^{(k)}:=I-U_Y^{(k)}(\Psi^{(k)}U_Y^{(k)})^{\dagger}\Psi^{(k)}$, and then we have
    \begin{align}
        \begin{split}
            \widetilde{A}^{(k)}-A &= U_Y^{(k)}(\Psi^{(k)}U_Y^{(k)})^{\dagger}\Psi^{(k)}A^{(k)}-A^{(k)}\\
            &=-[I-U_Y^{(k)}(\Psi^{(k)}U_Y^{(k)})^{\dagger}\Psi^{(k)}]A^{(k)} \\
            &=  - P^{(k)}A^{(k)} \ .
        \end{split}
    \end{align} 
We also observe that $ P^{(k)} $ is a projection, i.e., $ {P^{(k)}}^2=P^{(k)} $, satisfying
    \begin{align}
        \begin{split}
            \ker P^{(k)}=U_Y^{(k)}, \ \  {\rm Im} P^{(k)} := S^{(k)}.
        \end{split}
    \end{align}
From Lemma \ref{Lem3} in the Appendix, we have
    \begin{align}
        P^{(k)}A^{(k)}=S^{(k)}(T^{(k)} S^{(k)})^{-1}T^{(k)} A^{(k)},
    \end{align}
where $ T^{(k)}:={{U_Y^{(k)}}_\perp^\top} $ represents the transpose of the orthogonal complement of $ U_Y^{(k)} $. With the SVD of $ A^{(k)} $ being $ A^{(k)}=U_A^{(k)}\Sigma_A^{(k)}{V_A^{(k)}}^\top $,  we then can bound 
    \begin{align}
        \begin{split}
        \label{eq32}
            \lVert \widetilde{A}^{(k)}-A\lVert&=\lVert P^{(k)}A^{(k)}\lVert_2\\
            &=\lVert S^{(k)}(T^{(k)S^{(k)}})^{-1}T^{(k)}U_A^{(k)}\Sigma_A^{(k)}{V_A^{(k)}}^\top \lVert_2\\
            &\leq\lVert (T^{(k)}S^{(k)})^{-1}\lVert_2 \lVert T^{(k)}U_A^{(k)}\Sigma_A^{(k)}{V_A^{(k)}}^\top \lVert_2 \ .
        \end{split}
    \end{align}
Since $ Y^{(k)}=A^{(k)}\Omega^{(k)}+Z^{(k)}=U_A^{(k)}\Sigma_A^{(k)}{V_A^{(k)}}^\top \Omega^{(k)}+Z^{(k)} $, we have
    \begin{align}
    \label{eq33}
        U_A^{(k)}\Sigma_A^{(k)}{V_A^{(k)}}^\top \Omega^{(k)}=Y^{(k)}-Z^{(k)}.
    \end{align}
where $ {V_A^{(k)}} $ has orthonormal columns and $\Omega^{(k)}$ is Gaussian matrix. With probability~1, $ {V_A^{(k)}}^\top \Omega^{(k)} $ has linearly independent rows \cite{ma2023}, i.e.,
    \begin{align}
    \label{eq34}
        ({V_A^{(k)}}^\top \Omega^{(k)})({V_A^{(k)}}^\top \Omega^{(k)})^\dagger=I \ .
    \end{align}
Combining (\ref{eq33}) and (\ref{eq34}) yields
    \begin{align}
        U_A^{(k)}\Sigma_A^{(k)}=(Y^{(k)}-Z^{(k)})({V_A^{(k)}}^\top \Omega^{(k)})^\dagger.
    \end{align}
Simplify the second term of the right hand side of (\ref{eq32}), {Since $ T^{(k)}:={{U_Y^{(k)}}_\perp}^\top $ and $Y^{(k)}=U_Y^{(k)}\Sigma_Y^{(k)}{V_Y^{(k)}}^\top$, we know that $T^{(k)}Y^{(k)}=0$.}
    \begin{align}
        \begin{split}
        \label{eq36}
            \lVert T^{(k)}U_A^{(k)}\Sigma_A^{(k)}{V_A^{(k)}}^\top \lVert_2
            &= \lVert T^{(k)}U_A^{(k)}\Sigma_A^{(k)}\lVert_2 \\
            &= \lVert T^{(k)}(Y^{(k)}-Z^{(k)})({V_A^{(k)}}^\top \Omega^{(k)})^\dagger\lVert_2\\
            &=\lVert T^{(k)}Z^{(k)}({V_A^{(k)}}^\top \Omega^{(k)})^\dagger\lVert_2\\
            &\leq \lVert T^{(k)}Z^{(k)}\lVert_2\lVert({V_A^{(k)}}^\top \Omega^{(k)})^\dagger\lVert_2\\
            &=\frac{\lVert T^{(k)}Z^{(k)}\lVert_2}{\sigma_{\min}({V_A^{(k)}}^\top \Omega^{(k)})}\\
            &=\frac{\lVert Z^{(k)}\lVert_2}{\sigma_{\min}({V_A^{(k)}}^\top \Omega^{(k)})}.
        \end{split}
    \end{align}
The above first and last equations are derived from the unitary invariance property. Using the fact that $ \lVert (T^{(k)}S^{(k)})^{-1}\lVert_2=1/\sigma_{\min}(T^{(k)}S^{(k)}) $ for the first term of (\ref{eq32}), we obtain the following bound
    \begin{align}
        \lVert \widetilde{A}^{(k)}-A\lVert\leq\frac{\lVert Z^{(k)}\lVert_2}{\sigma_{\min}(T^{(k)}S^{(k)})\sigma_{\min}({V_A^{(k)}}^\top \Omega^{(k)})}.
    \end{align}  
We now derive a probabilistic bound from (\ref{eq36}) using concentration inequalities from random matrix theory. Since $ S^{(k)} $ can be seen as the $ (r_{k-1}n_k)\times (r_{k-1}n_k-r_k) $ submatrix of a Haar unitary matrix $ (S^{(k)},S^{(k)}_\perp) $, by unitary invariance property, we have
\begin{equation}
     M^{(k)}:= \Big[\begin{smallmatrix}
    T^{(k)}\\
    {U^{(k)}_Y}^\top 
    \end{smallmatrix} \Big](S^{(k)},S^{(k)}_\perp),
\end{equation}
where $M^{(k)}$ is also a Haar unitary matrix \cite{ma2023}, and $ T^{(k)}S^{(k)} $ is exactly the upper left $(r_{k-1}n_k-r_k)\times (r_{k-1}n_k-r_k) $ corner of $M^{(k)}$. 
Applying Lemma \ref{Lem4} in the Appendix, we have
    \begin{equation}
        \mathbb{P}\big(\sigma_{\min}(T^{(k)}S^{(k)})\geq\frac{\sqrt{\delta_1}}{\sqrt{r_k(r_{k-1}n_k-r_k)}}\big)\geq1-\delta_1 \ ,
    \end{equation}
for $\forall\delta_1>0$. Since $ {V_A^{(k)}}^\top \Sigma^{(k)} $ is distributed as an $r_k\times r_k$ Gaussian random matrix, by Lemma \ref{Lem5} in the Appendix, for any $0<\delta_2<1$,
    \begin{equation}
        \mathbb{P}\big(\sigma_{\min}({V_A^{(k)}}^\top \Sigma^{(k)})\geq\frac{\sqrt{2\log(1/(1-\delta_2))+1}-1}{\sqrt{r_k}} \big) = 1-\delta_2 \ .
    \end{equation}
Combining the two probability estimates, with probability at least $1-\delta_1-\delta_2$,
    \begin{align}
    \label{eq38}
        \lVert \widetilde{A}^{(k)}-A\lVert\leq\frac{r_k\sqrt{(r_{k-1}n_k-r_k)}\lVert Z^{(k)}\lVert_2}{\sqrt{2\delta_1 \log(1/(1-\delta_2))+\delta_1}-\sqrt{\delta_1}} \ .
    \end{align}
    
For the second term of (\ref{eq28}), note that $\Psi^{(k)}U_Y^{(k)}$ is distributed as an $l_k\times r_k$ Gaussian random matrix. Since $l_k>r_k$, by Lemma \ref{Lem6} in the Appendix, $\forall \delta_3\in(0,1]$, with probability at least $1-\delta_3$,
    \begin{align}
    \label{eq39}
        \sigma_{\min}(\Psi^{(k)}U_Y^{(k)})\geq\sqrt{l_k}-\sqrt{r_k}-\sqrt{2\log(2/\delta_3)}.
    \end{align}
    Thus, with probability at least $1-\delta_3$,
    \begin{align}
    \begin{split}
           \label{eq40}
        \lVert U_Y^{(k)}(\Psi^{(k)}U_Y^{(k)})^{\dagger}(\widetilde{Z}^{(k)})\lVert_2&=\lVert (\Psi^{(k)}U_Y^{(k)})^{\dagger}(\widetilde{Z}^{(k)})\lVert_2\\
        &\leq\frac{\lVert (\widetilde{Z}^{(k)})\lVert_2}{\sigma_{\min}(\Psi^{(k)}U_Y^{(k)})}\\
        &\leq \frac{\lVert (\widetilde{Z}^{(k)})\lVert_2}{\sqrt{l_k}-\sqrt{r_k}-\sqrt{2\log(2/\delta_3)}} \ .
    \end{split}
    \end{align}
    
Combining (\ref{eq38}) and (\ref{eq40}), we finally get the error of the $k$-th sketching of unfolding matrix with probability at least $1-\delta_1-\delta_2-\delta_3$, i.e.,
    \begin{align}
    \begin{split}
        \lVert \hat{A}^{(k)}-A^{(k)}\lVert_2 
        \leq & \frac{r_k\sqrt{(r_{k-1}n_k-r_k)}\lVert Z^{(k)}\lVert_2}{\sqrt{2\delta_1 \log(1/(1-\delta_2))+\delta_1}-\sqrt{\delta_1}}\\
        & + \frac{\lVert (\widetilde{Z}^{(k)})\lVert_2}{\sqrt{l_k}-\sqrt{r_k}-\sqrt{2\log(2/\delta_3)}} \ .
    \end{split}
    \end{align}
Finally, by applying the triangle inequality to the tensor unfolding operations
\begin{equation}
        \lVert \mathcal{A}-\hat{\mathcal{A}}\rVert_2
        \leq \sum_{k=1}^{d-1} \lVert \hat{A}^{(k)}-A^{(k)}\lVert_2 \ ,
\end{equation}
we complete the proof.
\end{proof}


\section{Numerical Experiments}
\label{section:4}
To test the effectiveness of the proposed TT-subSKETCH algorithm, we compare it with the most related state-of-the-art algorithms TT-SVD and TT-rSVD. For ease of comparison, we set the power iteration parameter $q$ to 0, 1, and 2 in Algorithm \ref{alg:TTPS} and name them TT-SKETCH, TT-subSKETCH1, and TT-subSKETCH2, respectively. 

The data used for validation include synthetic data and real-world data with and without noise. Regarding the numerical settings, the sketch parameter $ l_n=2r_n, n=1,\cdots,d$. 
The quality of the reconstructed tensor is measured by the peak signal-to-noise ratio (PSNR), say $ \rho $, and the relative error $ \epsilon $. For tensor $ \mathcal{A}\in\mathbb{R}^{n_1\times n_2\times n_3} $ and its low-rank TT approximation $ \hat{\mathcal{A}} $, the PSNR is defined as
\begin{equation}
	\rho=10 \log_{10}\frac{n_1n_2n_3\lVert\hat{\mathcal{A}}\rVert_\infty^2}{\lVert\mathcal{A}-\hat{\mathcal{A}}\rVert_F^2} \ .
\end{equation}

The relative error $\epsilon$ of the low-rank reconstruction tensor is defined as
\begin{equation}
	\epsilon =\lVert\mathcal{A}-\hat{\mathcal{A}}\rVert_F/\lVert\mathcal{A}\rVert_F \ .
\end{equation}
All experiments were run on a laptop with 2.4 GHz Intel Core i7-8700T CPU and 16GB of RAM. The MATLAB Tensor Toolbox \cite{brett2023} is utilized to perform the experiments.

\subsection{Power function tensor}
We first evaluate the performance of different algorithms on power function tensor data. The power function tensor can be defined entry-wise as
\begin{equation}
	x_{i_1 i_2,\cdots,i_n}=\frac{1}{\sqrt[h]{i_1^h+\cdots i_n^h}}, \ n=1,2,\cdots,N.
\end{equation}
We fix $ N=5 $, $ h=5 $, and the size of data to be $ 45\times 45\times 45\times 45\times 45 $, and the equal target rank $(r,r,r,r)$ with $ r\in[2,11] $. 

\begin{figure}[!tb]
	\centering
	\includegraphics[width=0.9\textwidth]{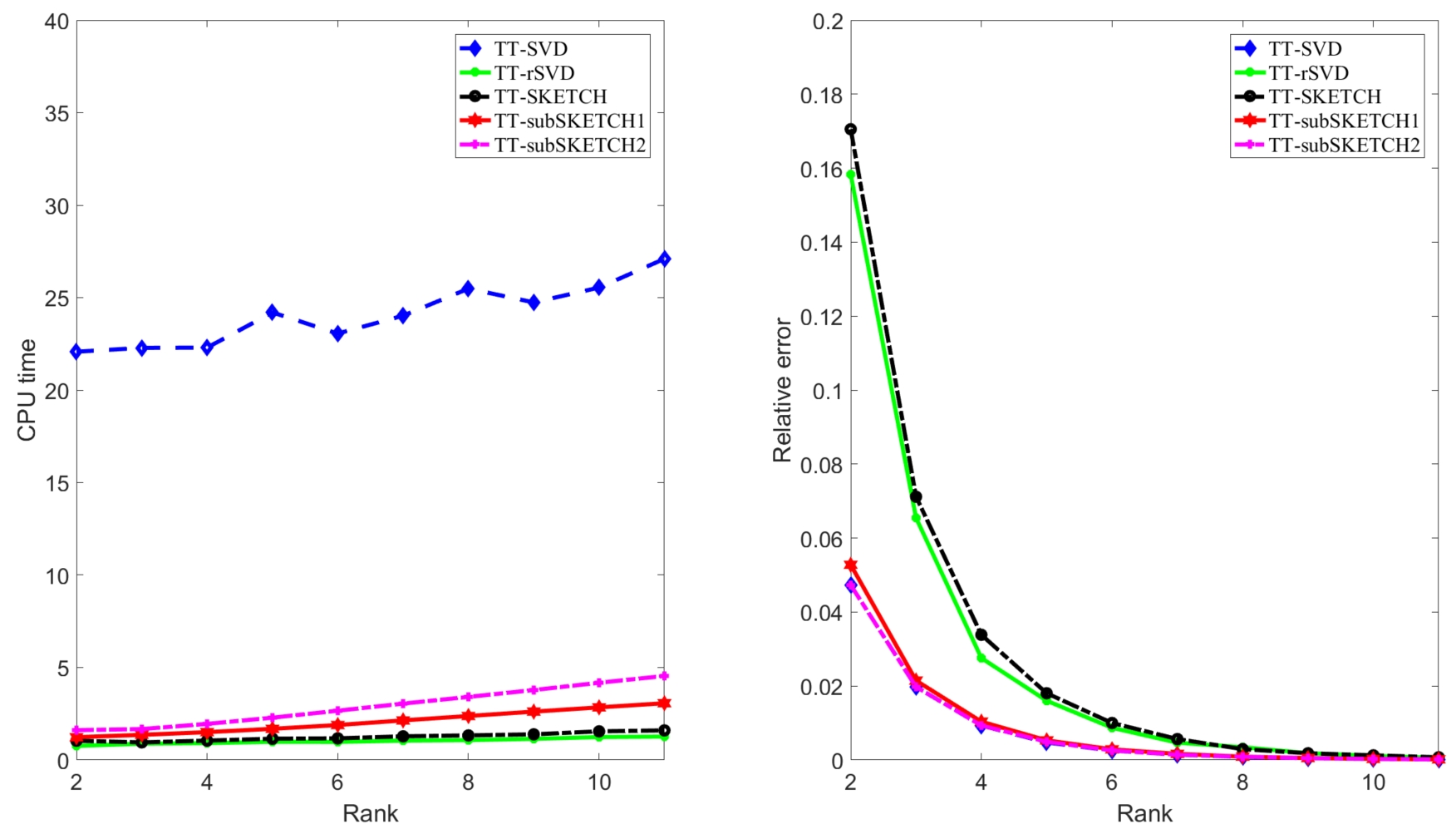}
	\caption{Results comparison on the power function tensor with a size of $ 45\times 45\times 45\times 45\times 45 $ in terms of the CPU time (left) and relative error (right) dependency on the target rank. The oversampling parameter $p$ is fixed at 2. 
    }
	\label{fig:1}
\end{figure}


The results on the power function tensor are given in Figure \ref{fig:1} in terms of the CPU time and relative error corresponding to different target ranks. It indicates that random algorithms consume significantly less CPU time compared to deterministic ones, and the CPU time increases with the power iteration parameter becoming large, especially for larger target ranks. For the relative error shown in the right panel of Figure \ref{fig:1}, we see that when the target rank is relatively low, the power iteration method significantly reduces the error. In particular, the relative errors of algorithms TT-subSKETCH1 and TT-subSKETCH2 are very close to TT-SVD and are significantly lower than TT-rSVD, demonstrating the excellent performance of the proposed algorithms.

\subsection{Real-world data}
%
We now evaluate the performance of different algorithms on a hyperspectral image named PaviaU\footnote{\scriptsize{https://ehu.eus/ccwintco/index.php?title=Hyperspectral$\_$Remote$\_$Sensing$\_$Scenes}}  with a size of $ 610\times 340\times 103 $ and a color video named Driving\footnote{\scriptsize{https://pixabay.com/videos/mercedes-glk-car-test-offroad-suv-1406/}} with a size of $ 360\times 640\times 3\times 100 $. The oversampling parameter $p$ is fixed to 5.

The results are given in Figure \ref{fig:2} on the hyperspectral image and in Figure \ref{fig:4} on the color video in terms of the CPU time, relative error, and PSNR corresponding to different target ranks. In particular, Figure \ref{fig:3} and Figure \ref{fig:5} give the qualitative results of the low-rank approximation of one spectral channel of the hyperspectral image and one frame of the color video, respectively, by different methods in terms of CPU time and PSNR.
The results show that the relative error and PSNR of TT-subSKETCH2 and TT-SVD are very similar, with our TT-subSKETCH2 being far more efficient in terms of CPU time. Although TT-rSVD and TT-SKETCH take less CPU time, their relative error and PSNR are not comparable to TT-subSKETCH2. Furthermore, comparing TT-subSKETCH1 with TT-subSKETCH2, we can see that increasing the number of power iterations can achieve better approximation result at the expense of slightly higher computation time.

\begin{figure}[!tb]
	\centering
	\includegraphics[width=0.9\textwidth,height=6.0cm]{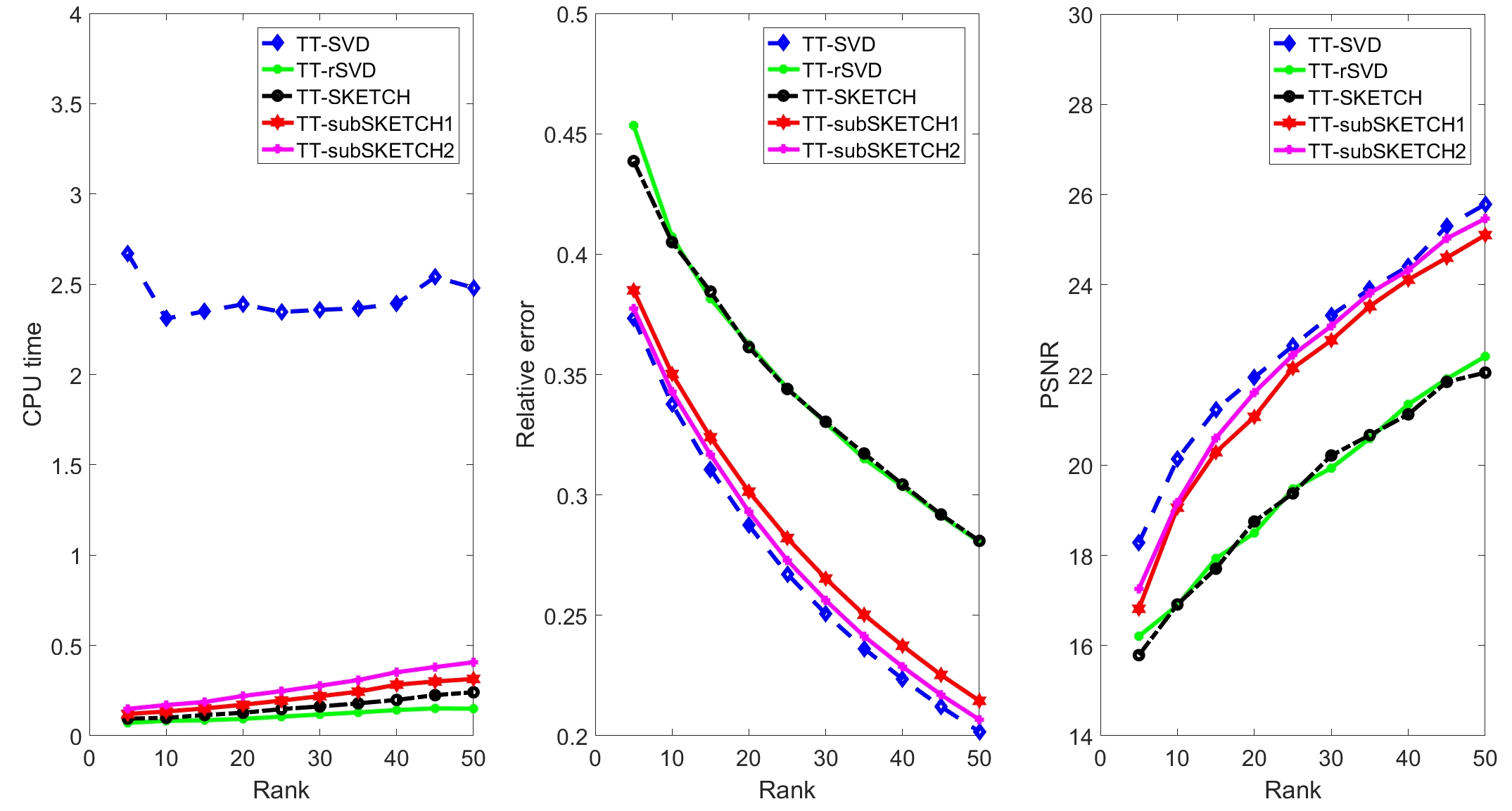}
	\caption{Results comparison on the hyperspectral image with a size of $ 610\times 340\times 103 $ in terms of CPU time (left), relative error (middle), and PSNR (right) dependency on the target rank. The oversampling parameter p is fixed at 5.}
	\label{fig:2}
\end{figure}

\begin{figure}[!tb]
	\centering
	\includegraphics[width=0.9\textwidth,height=6.0cm]{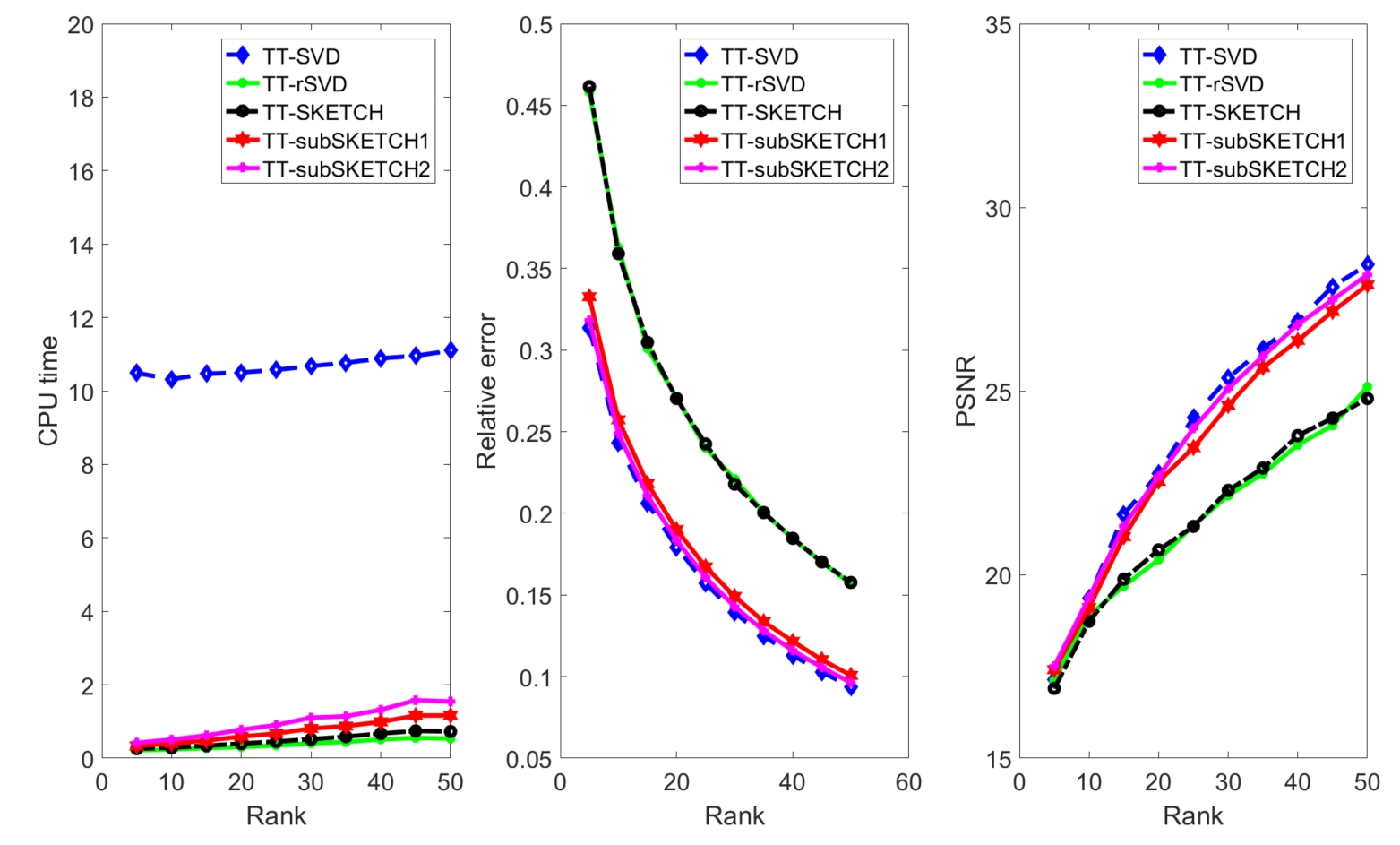}
	\caption{Results comparison on the color video with a size of $ 360\times 640\times 3\times 100 $ in terms of CPU time (left), relative error (middle), and PSNR (right) dependency on the target rank. The oversampling parameter p is fixed at 5.}
	\label{fig:4}
\end{figure}

\begin{figure}[!tb]
	\centering
	\includegraphics[trim={{.00\linewidth} {.40\linewidth} {.00\linewidth} {.00\linewidth}}, clip, width=0.99\textwidth]{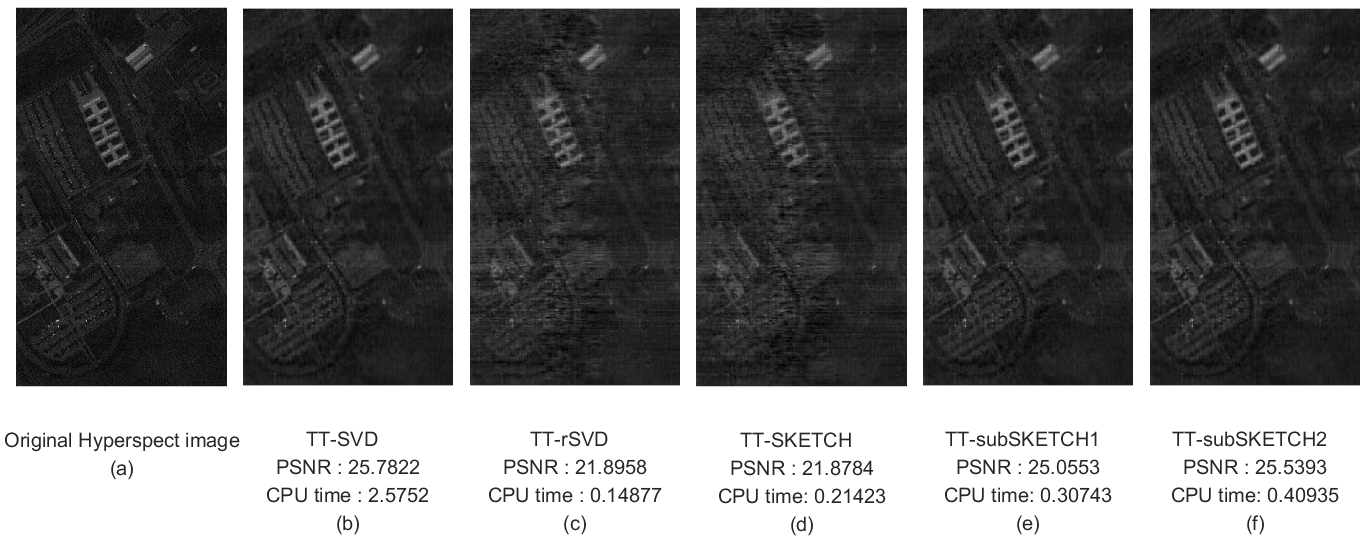}
    \put(-348, -4){\scriptsize Given}
    \put(-348, -11){\scriptsize  spectral}
    \put(-348, -18){\scriptsize   channel}
    \put(-298, -4){\tiny{TT-SVD}}
    \put(-298, -11){\tiny  PSNR: 25.7822}
    \put(-298, -18){\tiny   CPU time: 2.58}
    \put(-237, -4){\tiny{TT-rSVD}}
    \put(-237, -11){\tiny  PSNR: 21.8958}
    \put(-237, -18){\tiny   CPU time: 0.15}
    \put(-176, -4){\tiny{TT-SKETCH}}
    \put(-176, -11){\tiny  PSNR: 21.8784}
    \put(-176, -18){\tiny   CPU time: 0.21}
    \put(-118, -4){\tiny{TT-subSKETCH1}}
    \put(-118, -11){\tiny  PSNR: 25.0553}
    \put(-118, -18){\tiny   CPU time: 0.31}
    \put(-57, -4){\tiny{TT-subSKETCH2}}
    \put(-57, -11){\tiny  PSNR: 25.5393}
    \put(-57, -18){\tiny   CPU time: 0.41}
    
	\caption{Low-rank approximation of one spectral channel of the hyperspectral image by different methods in terms of CPU time and PSNR.}
	\label{fig:3}
\end{figure}

\begin{figure}[!tb]
	\centering
    \begin{tabular}{ccc}
	\includegraphics[trim={{.01\linewidth} {1.35\linewidth} {2.20\linewidth} {.01\linewidth}}, clip, width=0.3\textwidth, height=0.18\textwidth]{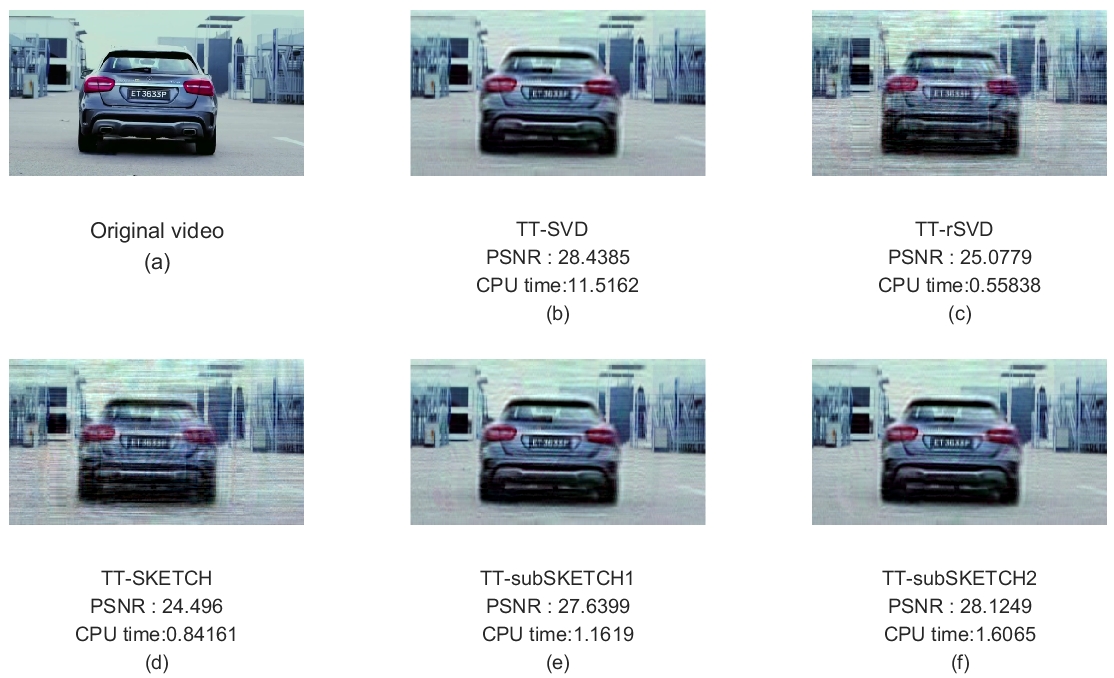} 
    \put(-87, -7){\tiny{Given original frame}} & 
    \includegraphics[trim={{1.12\linewidth} {1.35\linewidth} {1.10\linewidth} {.01\linewidth}}, clip, width=0.3\textwidth, height=0.18\textwidth]{Driving.png}
    \put(-70, -4){\tiny{TT-SVD}}
    \put(-107, -11){\tiny{PSNR: 28.4385; CPU time: 11.52}} & 
    \includegraphics[trim={{2.21\linewidth} {1.35\linewidth} {0.01\linewidth} {.01\linewidth}}, clip, width=0.3\textwidth, height=0.18\textwidth]{Driving.png}
    \put(-70, -4){\tiny{TT-rSVD}}
    \put(-105, -11){\tiny{PSNR: 25.0779; CPU time: 0.56}} \\
    \includegraphics[trim={{.01\linewidth} {.40\linewidth} {2.20\linewidth} {0.95\linewidth}}, clip, width=0.3\textwidth, height=0.18\textwidth]{Driving.png} 
    \put(-73, -4){\tiny{TT-SKETCH}}
    \put(-103, -11){\tiny{PSNR: 24.4960; CPU time: 0.84}} & 
    \includegraphics[trim={{1.12\linewidth} {0.40\linewidth} {1.10\linewidth} {0.95\linewidth}}, clip, width=0.3\textwidth, height=0.18\textwidth]{Driving.png} 
     \put(-81, -4){\tiny{TT-subSKETCH1}}
    \put(-106, -11){\tiny{PSNR: 27.6399; CPU time: 1.16}} & 
    \includegraphics[trim={{2.21\linewidth} {0.40\linewidth} {0.01\linewidth} {.95\linewidth}}, clip, width=0.3\textwidth, height=0.18\textwidth]{Driving.png} 
     \put(-81, -4){\tiny{TT-subSKETCH2}}
    \put(-105, -11){\tiny{PSNR: 28.1249; CPU time: 1.61}}
      
    \end{tabular}
    
	\caption{Low-rank approximation of one frame of the color video clip by different methods in terms of CPU time and PSNR.}
	\label{fig:5}
\end{figure}

\begin{figure}[!tb]
	\centering
	\includegraphics[width=0.85\textwidth,height=6.0cm]{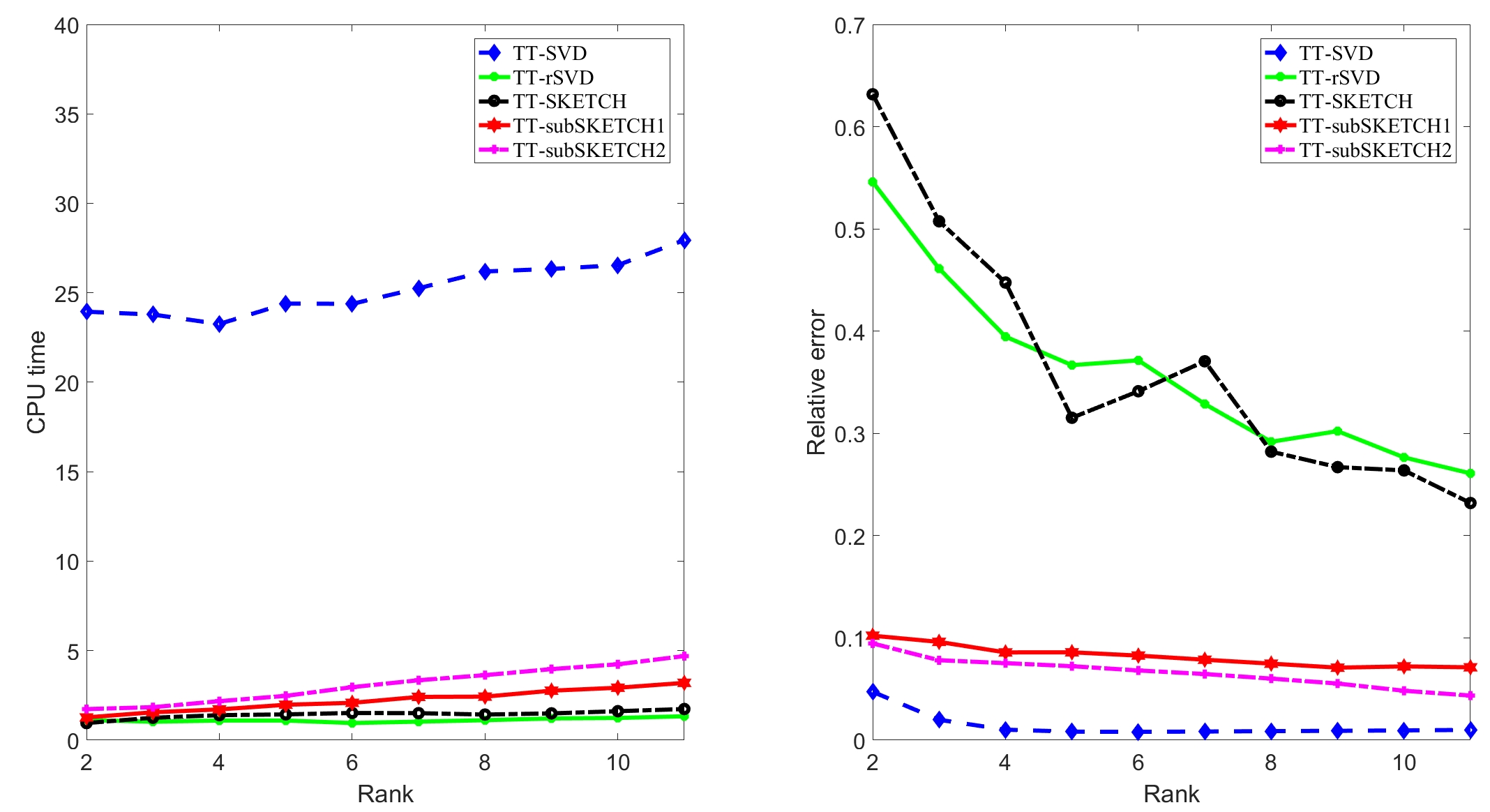}
	\caption{Results comparison on the noisy power function tensor with a size of $ 45\times 45\times 45\times 45\times 45 $ in terms of CPU time (left) and relative error (right) dependency on the target rank. The oversampling parameter p is fixed at 2, and the noise level is fixed at 2dB.}
	\label{fig:6}
\end{figure}

\begin{figure}[!tb]
	\centering
	\includegraphics[width=0.75\textwidth,height=6.5cm]{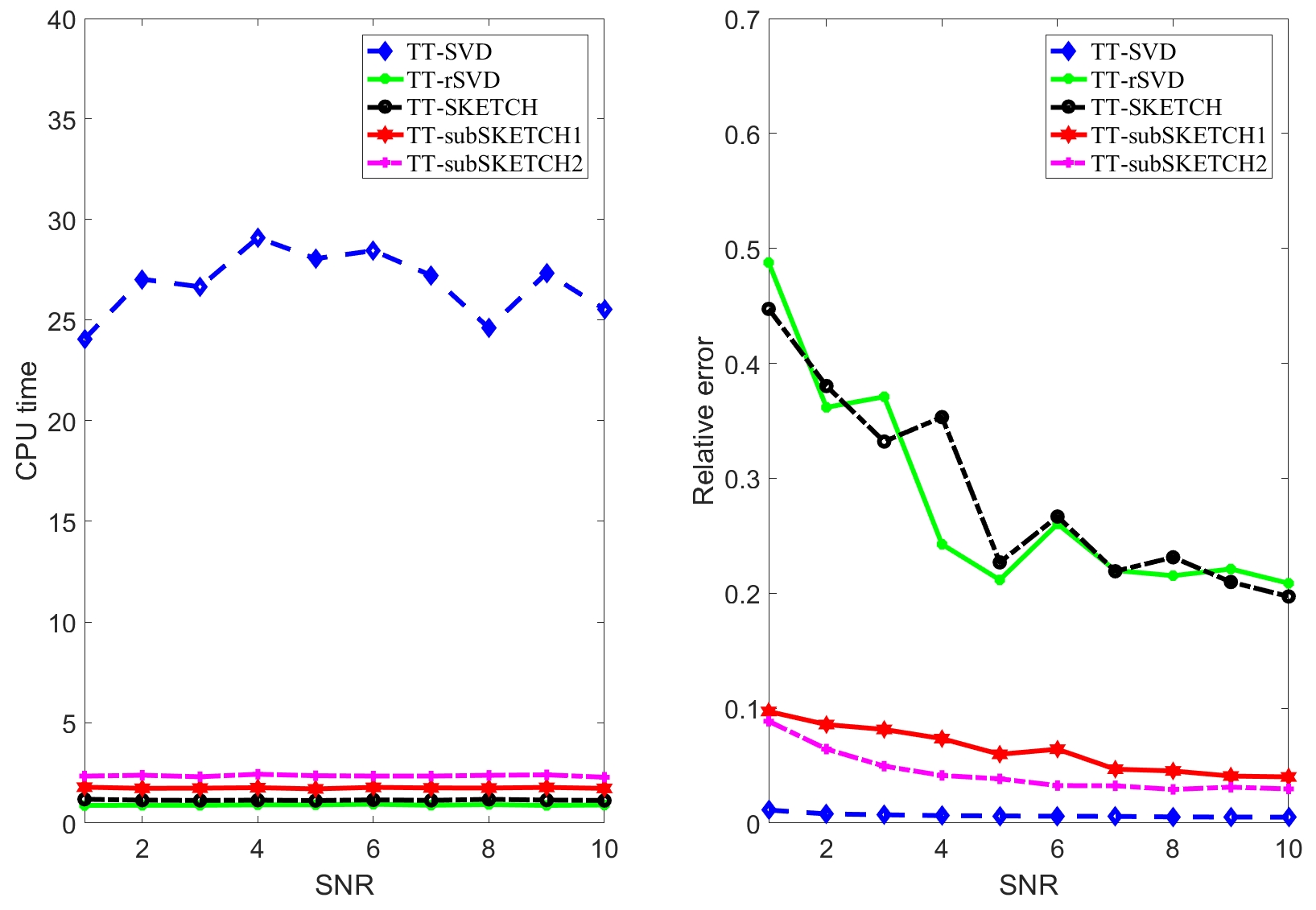}
	\caption{Results comparison on the noisy power function tensor with a size of $ 45\times 45\times 45\times 45\times 45 $ in terms of CPU time (left) and relative error (right) dependency on the SNR. The oversampling parameter p is fixed at 2, and the target rank is fixed at (5,5,5,5).}
	\label{fig:7}
\end{figure}

\subsection{Noisy data}
We finally consider the performance of different algorithms on synthetic and real-world data with noise, i.e., Gaussian white noise. The MATLAB built-in function \texttt{awgn}($\cdot$, SNR) is used to generate noise.

For the results on synthetic data, we keep the parameters the same as the settings in Section 4.1 and fix the noise level at 2dB. Figure \ref{fig:6} gives the results on the power function tensor of different algorithms in terms of the CPU time and relative error corresponding to different target ranks. Moreover, to further demonstrate the impact of noise on individual methods, we maintain the target rank constant and systematically change the noise level, see the results in Figure \ref{fig:7}.
Note that a higher SNR indicates lower noise levels, implying that with lower noise levels, the approximation error of different algorithms is lower. Among random algorithms, the results show that our TT-subSKETCH2 achieves the highest approximation quality; in particular, it requires much less time compared to TT-SVD.


\begin{figure}[!tb]
	\centering
	\includegraphics[width=0.9\textwidth,height=6.2cm]{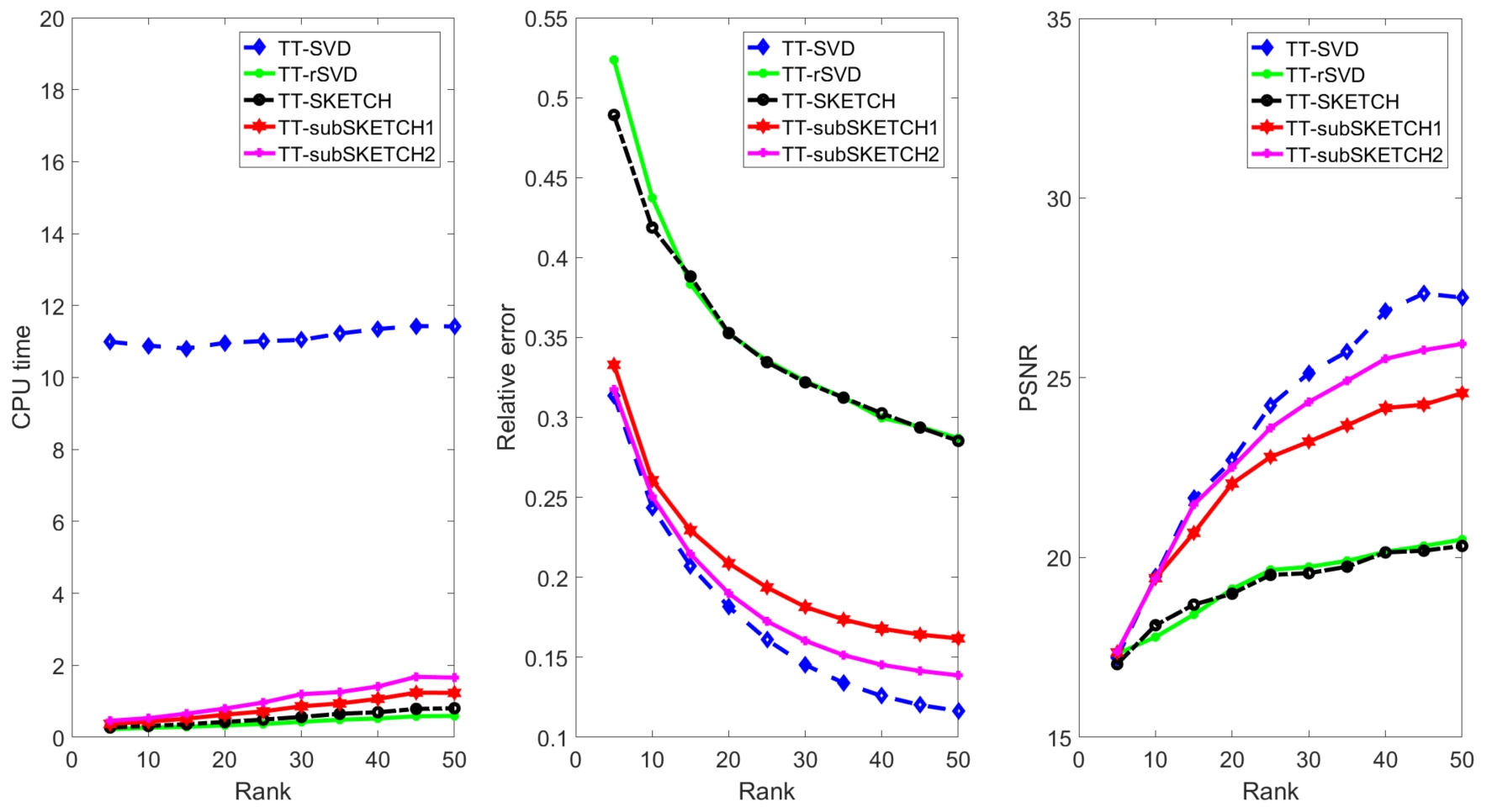}
	\caption{Results comparison on the noisy color video with a size of $ 360\times 640\times 3\times 100 $ in terms of CPU time (left), relative error (middle), and PSNR (right) dependency on the target rank. The oversampling parameter p is fixed at 5, and the noise level is fixed at 5dB.}
	\label{fig:8}
\end{figure}

\begin{figure}[!tb]
	\centering
    \begin{tabular}{ccc}
	\includegraphics[trim={{.01\linewidth} {1.30\linewidth} {1.91\linewidth} {.01\linewidth}}, clip, width=0.3\textwidth, height=0.18\textwidth]{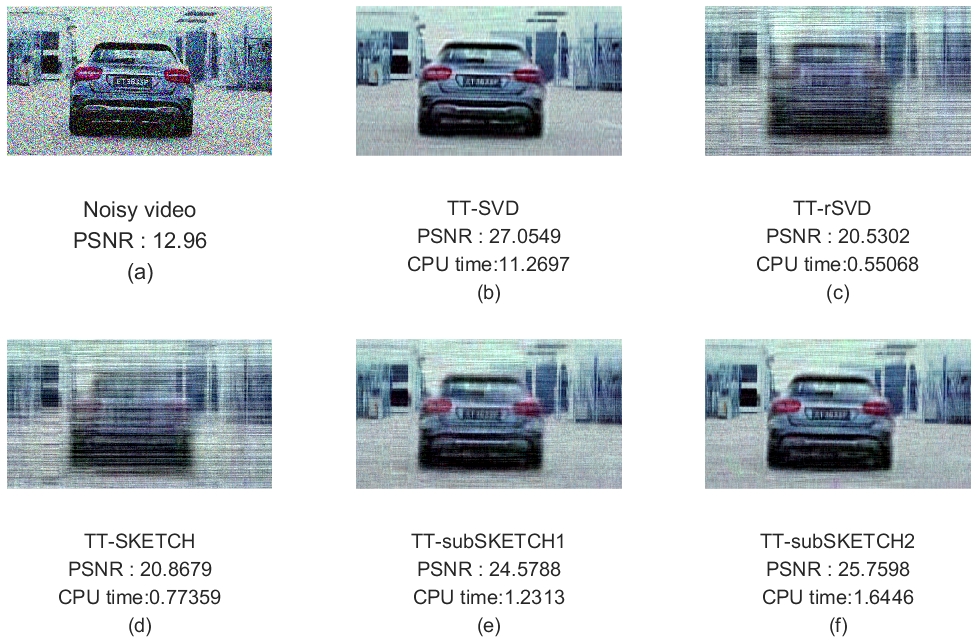} 
    \put(-95, -4){\tiny{Given original noisy frame}} 
    \put(-80, -11){\tiny{PSNR: 12.9600}} & 
    \includegraphics[trim={{0.97\linewidth} {1.30\linewidth} {0.96\linewidth} {.01\linewidth}}, clip, width=0.3\textwidth, height=0.18\textwidth]{Driving_noise.png}
    \put(-70, -4){\tiny{TT-SVD}}
    \put(-107, -11){\tiny{PSNR: 27.0549; CPU time: 11.27}} & 
    \includegraphics[trim={{1.91\linewidth} {1.30\linewidth} {0.01\linewidth} {.01\linewidth}}, clip, width=0.3\textwidth, height=0.18\textwidth]{Driving_noise.png}
    \put(-70, -4){\tiny{TT-rSVD}}
    \put(-105, -11){\tiny{PSNR: 20.5302; CPU time: 0.55}} \\
    \includegraphics[trim={{.01\linewidth} {.40\linewidth} {1.91\linewidth} {0.91\linewidth}}, clip, width=0.3\textwidth, height=0.18\textwidth]{Driving_noise.png} 
    \put(-73, -4){\tiny{TT-SKETCH}}
    \put(-103, -11){\tiny{PSNR: 20.8679; CPU time: 0.77}} & 
    \includegraphics[trim={{0.97\linewidth} {0.40\linewidth} {0.96\linewidth} {0.91\linewidth}}, clip, width=0.3\textwidth, height=0.18\textwidth]{Driving_noise.png} 
     \put(-81, -4){\tiny{TT-subSKETCH1}}
    \put(-106, -11){\tiny{PSNR: 24.5788; CPU time: 1.23}} & 
    \includegraphics[trim={{1.91\linewidth} {0.40\linewidth} {0.01\linewidth} {.91\linewidth}}, clip, width=0.3\textwidth, height=0.18\textwidth]{Driving_noise.png} 
     \put(-81, -4){\tiny{TT-subSKETCH2}}
    \put(-105, -11){\tiny{PSNR: 25.7598; CPU time: 1.64}}
      
    \end{tabular}
    
	\caption{Low-rank approximation of one frame of the color video clip with noise level fixed at 5dB by different methods in terms of CPU time and PSNR.}
	\label{fig:9}
\end{figure}


\begin{figure}[!tb]
	\centering
	\includegraphics[width=0.9\textwidth,height=6.0cm]{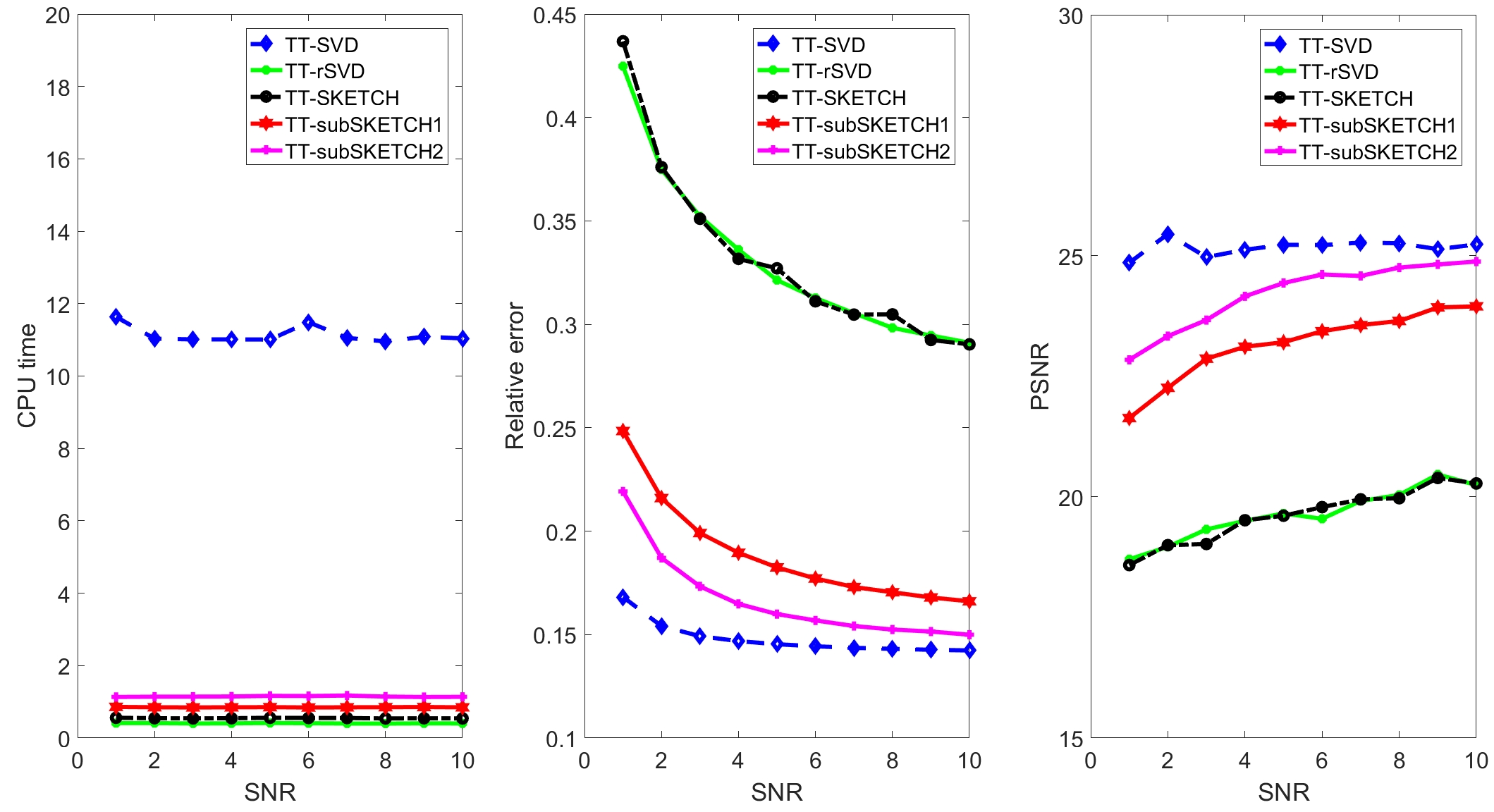}
	\caption{Results comparison on the noisy color video with a size of $ 360\times 640\times 3\times 100 $ in terms of CPU time (left), relative error (middle) and PSNR (right) dependency on the noise level. The oversampling parameter p is fixed at 5, and the target rank is fixed at (30,30).}
	\label{fig:10}
\end{figure}

For the results on real-world data, the color video data is used with parameters the same as that of Section 4.2. The quantitative and qualitative experimental results are shown in Figure \ref{fig:8} and Figure \ref{fig:9}, respectively, in terms of CPU time, relative error, and PSNR.
%
The results show that our TT-subSKETCH2 again achieves the approximation quality closest to TT-SVD in the presence of noise, demonstrating the superiority and robustness of the proposed algorithm. Figure \ref{fig:10} gives the results of different methods on the color video data with different level of noise under fixed rank. Consistent results are obtained, similar
to those on the synthetic data, further demonstrating the great performance of our proposed
algorithm.



\section{Conclusion}
\label{section:5}
In this paper, we proposed a two-sided sketching algorithm, named TT-subSKETCH, for TT approximation. 
Detailed error analysis and robustness analysis for the proposed algorithm TT-subSKETCH were conducted. Thorough experimental results on both synthetic and real-world data demonstrate that the proposed algorithm achieves superior accuracy and robustness compared to the related state-of-the-art random algorithms, while requiring significantly less time compared to deterministic algorithms.

\appendix
\section{Technical lemmas} 
Here we compile the essential technical lemmas that are pivotal in the proof of the proposed theorems.

\begin{lemma}
\label{Lem1}
(\cite{dong2023}, Lemma 4). Assume that the sketch size parameter satisfies $ l>k+1 $. Draw random test matrices $ \Omega\in\mathbb{R}^{n\times k} $ and $ \Phi\in\mathbb{R}^{l\times m} $ independently of the standard normal distribution. Then the rank-$k$ approximation $ \hat{A} $ obtained from Algorithm \ref{alg:sub} satisfies
			\begin{equation}
				\mathbb{E}\lVert A-\hat{A}\rVert_F^2\leq(1+f(k,l))\min\limits_{\varrho<k-1}(1+f(\varrho,k)\varpi_k^{4q})\cdot\tau_{\varrho+1}^2(A) \ ,
		\end{equation}
	where $ \varpi_k=\sigma_{k+1}/\sigma_k $ is the singular value gap, and $ f(s,t):=s/(t-s-1) $.
\end{lemma}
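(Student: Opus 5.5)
This is Lemma \ref{Lem1} (Lemma 4 of \cite{dong2023}). The plan is to split the error into a range-finder part and a two-sided recovery part, bound the second in terms of the first, and then bound the first with the standard power-scheme estimate.

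\textbf{Step 1: splitting the error.} Since $\hat A=QX$ with $Q$ orthonormal, we have
\[
A-\hat A=(I-QQ^\top)A+Q(Q^\top A-X).
\]
These two pieces are orthogonal in the Frobenius inner product, so
\[
\lVert A-\hat A\rVert_F^2=\lVert (I-QQ^\top)A\rVert_F^2+\lVert X-Q^\top A\rVert_F^2 .
\]

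\textbf{Step 2: the recovery term.} Let $P$ be an orthonormal basis with $PP^\top=I-QQ^\top$, and set $\Psi_1=\Phi P$, $\Psi_2=\Phi Q$. Because $\Psi_2$ has full column rank almost surely, $X=\Psi_2^\dagger\Phi A$ gives the exact identity
\[
X-Q^\top A=\Psi_2^\dagger\Psi_1 P^\top A .
\]
Now condition on $\Omega$, which fixes $Q$ and $P$. Since $P^\top Q=0$, rotation invariance makes $\Psi_1$ and $\Psi_2$ independent standard Gaussian matrices. Take the expectation over $\Psi_1$ first, using $\mathbb{E}\lVert G B C\rVert_F^2=\lVert B\rVert_F^2\lVert C\rVert_F^2$ for Gaussian $G$. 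Then use $\mathbb{E}\lVert\Psi_2^\dagger\rVert_F^2=k/(l-k-1)$, the inverse-Wishart trace, which is where $l>k+1$ is needed. This gives
\[
\mathbb{E}\bigl[\lVert X-Q^\top A\rVert_F^2\mid\Omega\bigr]=f(k,l)\,\lVert (I-QQ^\top)A\rVert_F^2 .
\]
Taking the total expectation,
\[
\mathbb{E}\lVert A-\hat A\rVert_F^2=(1+f(k,l))\,\mathbb{E}\lVert (I-QQ^\top)A\rVert_F^2 .
\]

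\textbf{Step 3: the range-finder term (main obstacle).} It remains to bound $\mathbb{E}\lVert (I-QQ^\top)A\rVert_F^2$ for $Q$ spanning $(AA^\top)^qA\Omega$. This is a power-scheme range finder in the Frobenius norm, but the desired bound has a gap-dependent factor $\varpi_k^{4q}$ rather than the exponent $1/(2q+1)$ appearing in Theorem \ref{Thm3}. I would therefore follow Gu's analysis \cite{gu2015}.

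Fix $\varrho$ and write the SVD $A=U\Sigma V^\top$. Split $\Omega_1=V_1^\top\Omega$ (the top $\varrho$ directions) and $\Omega_2=V_2^\top\Omega$. The deterministic bound of \cite{halko2011} (Theorem 9.1), applied to $B=(AA^\top)^qA$ and transferred back to $A$ via Gu's argument, gives
\[
\lVert (I-QQ^\top)A\rVert_F^2\le\tau_{\varrho+1}^2(A)+\bigl\lVert \Sigma_2^{2q+1}\Omega_2\Omega_1^\dagger\Sigma_1^{-2q}\bigr\rVert_F^2 .
\]
The remaining work is to bound the second term by $\varpi_k^{4q}\,\tau^2_{\varrho+1}\lVert\Omega_2\Omega_1^\dagger\rVert_2^2$, using the ratios $\sigma_j/\sigma_i$ for $j>k\geq i$, and then take the Gaussian expectation of $\lVert\Omega_2\Omega_1^\dagger\rVert$. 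That expectation is again an inverse-Wishart computation and yields the factor $f(\varrho,k)$.

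The delicate part is handling the indices strictly between $\varrho$ and $k$. For these, $\Sigma_2$ is not suppressed by the gap $\varpi_k$, and this is what forces restricting to $\varrho<k-1$ and minimizing over $\varrho$. Finally, keeping only the first $k$ columns of $Q_q$ (the truncation step of Algorithm \ref{alg:sub}) would be handled as in \cite{dong2023}, at the cost of the stated constants.
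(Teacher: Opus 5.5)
\textbf{Steps 1 and 2 are correct.} They follow the argument of \cite{tropp2017} and give the exact reduction $\mathbb{E}\lVert A-\hat A\rVert_F^2=(1+f(k,l))\,\mathbb{E}\lVert(I-QQ^\top)A\rVert_F^2$. The paper itself gives no proof of this lemma; it only imports it from \cite{dong2023}, so there is no argument there to compare against.

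\textbf{The gap is in Step 3.} Your deterministic bound $\lVert(I-QQ^\top)A\rVert_F^2\le\tau_{\varrho+1}^2(A)+\lVert\Sigma_2^{2q+1}\Omega_2\Omega_1^\dagger\Sigma_1^{-2q}\rVert_F^2$ is fine. Your proposed estimate of its second term via $\varpi_k^{4q}$ is false. $\Sigma_2$ contains $\sigma_{\varrho+1},\dots,\sigma_k$, so the only factor you can extract is $\lVert\Sigma_2^{2q}\rVert_2\lVert\Sigma_1^{-2q}\rVert_2=(\sigma_{\varrho+1}/\sigma_\varrho)^{2q}$. Concretely, let $\sigma_1=\dots=\sigma_k=1$ and $\sigma_j=0$ for $j>k$. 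Then $\varpi_k=0$, yet the second term equals $\lVert\Omega_{\mathrm{mid}}\Omega_1^\dagger\rVert_F^2>0$ almost surely, where $\Omega_{\mathrm{mid}}$ is rows $\varrho+1,\dots,k$ of $V^\top\Omega$.

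The restriction $\varrho<k-1$ does not address this. Its only role is to make $\mathbb{E}\lVert\Omega_1^\dagger\rVert_F^2=\varrho/(k-\varrho-1)$ finite.

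Your expectation step also needs repair. Passing through $\lVert\Omega_2\Omega_1^\dagger\rVert_2^2$ loses $f(\varrho,k)$, because its mean grows with $n$. You must keep $\Sigma_2$ inside and use $\mathbb{E}\lVert\Sigma_2\Omega_2\Omega_1^\dagger\rVert_F^2=\lVert\Sigma_2\rVert_F^2\,\mathbb{E}\lVert\Omega_1^\dagger\rVert_F^2=f(\varrho,k)\,\tau_{\varrho+1}^2(A)$.

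\textbf{No estimate of that second term can produce $\varpi_k$.} Your bound controls the directions $u_j$, $j>\varrho$, only by the trivial $\tau^2_{\varrho+1}(A)$. So its second term must dominate the leakage of the top directions, $\sum_{j\le\varrho}\sigma_j^2\lVert(I-QQ^\top)u_j\rVert^2$.

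Take $A=\mathrm{diag}(1,1,1,\varpi)$, $k=3$, $\varrho=1$, $q=1$. The orthogonal complement of $\mathrm{range}(\Sigma^3\Omega)$ is spanned by $\Sigma^{-3}g$ with $g$ standard Gaussian. The leakage is therefore $\mathbb{E}\,[g_1^2/(g_1^2+g_2^2+g_3^2+\varpi^{-6}g_4^2)]$. Restricting to the event $|g_4|\le\varpi^3$ shows this is at least $c\,\varpi^3$ for an absolute constant $c>0$. By contrast, $f(1,3)\,\varpi_3^{4}\,\tau_2^2(A)=\varpi^4(2+\varpi^2)$, which is smaller once $\varpi$ is small.

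What your route actually proves is the bound with $(\sigma_{\varrho+1}/\sigma_\varrho)^{4q}$ in place of $\varpi_k^{4q}$. Reaching the gap at index $k$ needs a genuinely different argument, one showing that the middle directions $u_{\varrho+1},\dots,u_k$ are also captured. That brings in the square block $V_{1:k}^\top\Omega$, whose inverse has infinite second moment. Your plan contains no such argument.
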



\begin{lemma}
    \label{Lem8}
(Presenting the explicit expression of $ X-Q^\top A $ in \cite{tropp2017}, Lemma A.4).
     Construct a matrix $ P\in\mathbb{R}^{n\times (n-k)} $ with orthonormal columns satisfying
	\begin{equation}
		PP^\top = I-QQ^\top .
	\end{equation}
Introduce the matrices
\begin{equation}
	\Psi_1:=\Psi P\in\mathbb{R}^{l\times (n-k)}, \ \  \Psi_2:=\Psi Q\in\mathbb{R}^{l\times k}.
\end{equation}
Assume that the matrix $\Psi_2$ has full column rank. Then
\begin{equation}
	X-Q^\top A=\Psi_2^{\dagger}\Psi_1(P^\top A) \ .
\end{equation}

\end{lemma}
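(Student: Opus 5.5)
The identity in Lemma \ref{Lem8} is the deterministic core of the error analysis of the two-sided sketch (\cite{tropp2017}, Lemma A.4). I will prove it by writing both $A$ and $W=\Psi A$ in the orthogonal splitting induced by $Q$ and $P$. Since $Q$ has orthonormal columns and $PP^\top=I-QQ^\top$, we have $QQ^\top+PP^\top=I$ on the row space of $\Psi$ (the ambient dimension of the columns of $A$). Hence
\begin{equation*}
A = QQ^\top A + PP^\top A, \qquad W=\Psi A = (\Psi Q)(Q^\top A) + (\Psi P)(P^\top A) = \Psi_2 (Q^\top A) + \Psi_1 (P^\top A).
\end{equation*}
I will state this decomposition first. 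It uses nothing beyond the definition of $P$ and linearity.

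Next I will identify $X$. In Algorithm \ref{alg:sub}, $X=(\Psi Q)\backslash W$. Because $\Psi_2=\Psi Q$ is tall ($l>k$) and assumed to have full column rank, the backslash solution is the least-squares solution $X=\Psi_2^\dagger W$. Moreover, $\Psi_2^\dagger\Psi_2 = I_k$ by the full column rank assumption. Applying $\Psi_2^\dagger$ to the decomposition of $W$ gives
\begin{equation*}
X = \Psi_2^\dagger\Psi_2 (Q^\top A) + \Psi_2^\dagger \Psi_1 (P^\top A) = Q^\top A + \Psi_2^\dagger\Psi_1(P^\top A).
\end{equation*}
Subtracting $Q^\top A$ yields the claimed formula.

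The only point needing care is the justification of $\Psi_2^\dagger\Psi_2=I$, together with the interpretation of MATLAB backslash as $\Psi_2^\dagger$. Both follow from full column rank, since then $\Psi_2^\dagger=(\Psi_2^\top\Psi_2)^{-1}\Psi_2^\top$, which is exactly the normal-equations solution operator. I also note that the dimension of $P$ must match the column dimension of $Q$ and the number of columns of $\Psi$; the statement writes $n-k$, which I read as (ambient dimension of the columns of $A$) minus $k$. With that understood, no further obstacle remains and the proof is a two-line computation.
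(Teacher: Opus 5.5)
Your argument is correct and is the standard proof of the cited result (Tropp et al., Lemma A.4), which the paper quotes without reproducing: decompose $W=\Psi_2(Q^\top A)+\Psi_1(P^\top A)$, then apply $\Psi_2^\dagger$ and use $\Psi_2^\dagger\Psi_2=I_k$. Your reading of the dimensions is also right: $P$ should be $m\times(m-k)$, where $m$ is the row dimension of $A$ (the statement's $n-k$ is a typo). The identity $QQ^\top+PP^\top=I_m$ holds on all of $\mathbb{R}^m$, so there is no need to restrict it to ``the row space of $\Psi$''.
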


\begin{lemma}
		\label{Lem2}
		(Pseudo-inverse properties, see e.g. \cite{golub2013}). The following properties hold for the pseudo-inverse.
  \begin{enumerate} 
  \item If $A$ has linearly independent columns, then $A^\dagger A=I$. If $B$ has linearly independent rows, then $BB^\dagger=I$. 
  \item $(AB)^\dagger=B^\dagger A^\dagger$ if $A$ has orthonormal columns or $B$ has orthonormal rows.
  \item $(AB)^\dagger=B^\dagger A^\dagger$ if $A$ has linearly independent columns.
  \item If $A$ has orthonormal columns or orthonormal rows, then $ A^\dagger=A^\top $.
  \end{enumerate}
\end{lemma}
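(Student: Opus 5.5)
This is a collection of standard facts about the Moore–Penrose pseudoinverse, so I will verify each item directly from the four Penrose conditions: $AA^\dagger A=A$, $A^\dagger AA^\dagger=A^\dagger$, and symmetry of $AA^\dagger$ and $A^\dagger A$. Uniqueness of the pseudoinverse reduces each identity to checking that a proposed matrix satisfies these conditions. The workhorse is the full-rank formulas. If $A$ has linearly independent columns, then $A^\top A$ is invertible and $A^\dagger=(A^\top A)^{-1}A^\top$. If $B$ has linearly independent rows, then $B^\dagger=B^\top(BB^\top)^{-1}$. Both are checked against the four conditions in one line each.

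\textbf{Item 1.} This follows immediately from the full-rank formulas: $A^\dagger A=(A^\top A)^{-1}A^\top A=I$ and $BB^\dagger=BB^\top(BB^\top)^{-1}=I$.

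\textbf{Item 4.} Orthonormal columns give $A^\top A=I$, so the formula yields $A^\dagger=A^\top$. Orthonormal rows give $AA^\top=I$, with the same conclusion.

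\textbf{Item 2.} Suppose $A$ has orthonormal columns. I will show that $B^\dagger A^\top$ satisfies the four Penrose conditions for $AB$, using $A^\top A=I$:
\begin{itemize}
\item $(AB)(B^\dagger A^\top)(AB)=ABB^\dagger B=AB$;
\item $(B^\dagger A^\top)(AB)(B^\dagger A^\top)=B^\dagger BB^\dagger A^\top=B^\dagger A^\top$;
\item $(AB)(B^\dagger A^\top)=A(BB^\dagger)A^\top$, which is symmetric;
\item $(B^\dagger A^\top)(AB)=B^\dagger B$, which is symmetric.
\end{itemize}
By item 4, $B^\dagger A^\top=B^\dagger A^\dagger$. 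The case where $B$ has orthonormal rows is symmetric, using $BB^\top=I$.

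\textbf{Item 3.} This is the only delicate point. As literally stated, with $A$ having independent columns and $B$ arbitrary, the reverse-order law is false in general. The standard correct version also requires $B$ to have linearly independent rows; that is, $A$ full column rank and $B$ full row rank. I would therefore prove it under that hypothesis, which is the setting in which the paper uses it: the factorization $U_Y\Sigma_YV_Y^\top$ with invertible $\Sigma_YV_Y^\top$, and $\Psi U_Y$ of full column rank.

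Under that hypothesis, set $C=B^\dagger A^\dagger=B^\top(BB^\top)^{-1}(A^\top A)^{-1}A^\top$ and use $A^\dagger A=I$ and $BB^\dagger=I$ from item 1:
\begin{itemize}
\item $ABCAB=AB$ and $CABC=C$ follow directly;
\item $ABC=AA^\dagger$, which is symmetric;
\item $CAB=B^\dagger B$, which is symmetric.
\end{itemize}

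The main obstacle is thus not computational but the correct formulation of item 3. I would add the full-row-rank (or invertible-$B$) condition explicitly and remark that this suffices for all uses in the proofs.
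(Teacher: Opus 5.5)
Your proposal is correct. The paper gives no argument for this lemma; it only points to Golub--Van Loan. So your verification from the four Penrose conditions, together with the full-rank formulas $A^\dagger=(A^\top A)^{-1}A^\top$ and $B^\dagger=B^\top(BB^\top)^{-1}$, is a self-contained substitute rather than a competing route. Items 1, 2 and 4 check out exactly as you wrote them.

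Your objection to item 3 is also right, and an explicit counterexample makes it concrete. Take $A=\bigl(\begin{smallmatrix}1&0\\1&1\end{smallmatrix}\bigr)$, which is invertible, and $B=\bigl(\begin{smallmatrix}1\\0\end{smallmatrix}\bigr)$. Then $AB=(1,1)^\top$, so $(AB)^\dagger=\tfrac12(1,1)$, while $B^\dagger A^\dagger=(1,0)A^{-1}=(1,0)$.

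Your own computation pinpoints the failure. If $A^\dagger A=I$ and $C=B^\dagger A^\dagger$, then $ABCAB=AB$, $CABC=C$ and the symmetry of $CAB=B^\dagger B$ hold for every $B$. The only condition that can fail is the symmetry of $ABC=ABB^\dagger(A^\top A)^{-1}A^\top$. Since $A$ has full column rank, this holds if and only if the projector $BB^\dagger$ commutes with $A^\top A$. That commutation is automatic when $BB^\dagger=I$ (your corrected hypothesis) or when $A^\top A=I$ (the first case of item 2). This explains why those versions are true and the stated item 3 is not.

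Your reading of how the paper uses item 3 is also accurate. In the proof of Theorem~\ref{Thm5} it is applied to $(\Psi^{(k)}U_Y^{(k)})(\Sigma_Y^{(k)}{V_Y^{(k)}}^\top)$. The second factor is square and invertible, since ${\Sigma_Y^{(k)}}^{-1}$ is used there, so your corrected statement covers this step. The other pseudo-inverse manipulation in that proof, $({V_A^{(k)}}^\top\Omega^{(k)})({V_A^{(k)}}^\top\Omega^{(k)})^\dagger=I$, only needs item 1.
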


\begin{lemma}
    \label{Lem3}
    (\cite{hansen2004}, Theorem 2.1). Let $P\in\mathbb{R}^{n\times n}$ be a projection matrix, i.e., $P^2=P$. We observe that $P$ is distinctively defined by the subspaces $V_1$ and $V_2$, i.e.,
    \begin{equation}
        V_1 := {\rm Im} P, \ \ 
        V_2 := {\rm Ker} P,
    \end{equation}
    and $\dim V_1 + \dim V_2 = n$. We denote by $V_1$ a matrix with orthonormal columns, whose column span is equal to the subspace $V_1$. Analogously, we denote by $V_2$ a matrix with orthonormal columns, whose column span is equal to the subspace $V_2$. It holds that
    \begin{equation}
        P=V_1(V_{2,\perp}^\top V_1)^{-1}V_{2,\perp}^\top \ .
    \end{equation}
\end{lemma}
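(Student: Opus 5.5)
The plan is to define the candidate matrix on the right-hand side and show that it agrees with $P$ on two complementary subspaces. Write $W:=V_{2,\perp}$ for a matrix with orthonormal columns spanning the orthogonal complement of ${\rm Ker}\,P$, and set $s:=\dim V_1$. Since $\dim V_1+\dim V_2=n$, the matrix $W$ has $n-\dim V_2=s$ columns. Hence $W^\top V_1$ is a square $s\times s$ matrix, so the formula at least makes dimensional sense. Define
\begin{equation*}
R:=V_1(W^\top V_1)^{-1}W^\top ,
\end{equation*}
once invertibility is known. The goal is then to prove $R=P$.

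\textbf{Step 1 (main obstacle): invertibility of $W^\top V_1$.} This is the only genuinely nontrivial point. Suppose $W^\top V_1x=0$. Then $V_1x$ is orthogonal to the column span of $W$, i.e.\ to $({\rm Ker}\,P)^\perp$, so $V_1x\in{\rm Ker}\,P$. It also lies in ${\rm Im}\,P$. For a projection these subspaces meet only in $0$: if $y=Pz$ and $Py=0$, then $y=Pz=P^2z=Py=0$. Therefore $V_1x=0$, and since $V_1$ has orthonormal columns, $x=0$. A square matrix with trivial kernel is invertible.

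\textbf{Step 2: agreement on the two subspaces.} On ${\rm Im}\,P$ we have $RV_1=V_1(W^\top V_1)^{-1}(W^\top V_1)=V_1$. This equals $PV_1$, because $P$ acts as the identity on its image: $P(Pz)=Pz$. On ${\rm Ker}\,P$ we have $W^\top V_2=0$ by construction, so $RV_2=0=PV_2$.

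\textbf{Step 3: conclusion.} By Step 1 the intersection ${\rm Im}\,P\cap{\rm Ker}\,P$ is trivial, and the dimensions add to $n$. Hence $\mathbb{R}^n={\rm Im}\,P\oplus{\rm Ker}\,P$, i.e.\ the columns of $[V_1,V_2]$ form a basis. The linear maps $R$ and $P$ agree on this basis, so $R=P$, which is the claimed formula.
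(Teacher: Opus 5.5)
Your proof is correct and complete. Step 1 correctly reduces invertibility of the square matrix $V_{2,\perp}^\top V_1$ to ${\rm Im}\,P\cap{\rm Ker}\,P=\{0\}$. Steps 2--3 then show that the candidate matrix agrees with $P$ on the basis formed by the columns of $[V_1,V_2]$.

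The paper gives no argument of its own for this lemma; it is taken directly from Hansen's Theorem 2.1. Your proof is therefore a self-contained replacement for that citation, and it is the standard verification for oblique-projection formulas.

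Two small points in your favour:
\begin{itemize}
\item Step 3 also justifies the statement's side remark that $P$ is uniquely determined by its image and kernel. Any projection with that image and kernel must coincide with $V_1(V_{2,\perp}^\top V_1)^{-1}V_{2,\perp}^\top$.
\item The hypothesis $\dim V_1+\dim V_2=n$ is just rank--nullity, so your use of it to size $V_{2,\perp}$ assumes nothing beyond $P^2=P$.
\end{itemize}
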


\begin{lemma}
    \label{Lem4}
    (\cite{banks2023}, Proposition C.3). Let $A$ be the upper left $(n-r)\times(n-r)$ corner of a Haar unitary matrix $U\in\mathbb{R}^{n\times n}$. Then  $\forall\delta>0$,
    \begin{equation}
        \mathbb{P}\Big(\sigma_{\min}(A)\geq\frac{\sqrt{\delta}}{\sqrt{r(n-r)}}\Big) \geq 1-\delta \ .
    \end{equation}
\end{lemma}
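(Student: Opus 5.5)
The plan is to reduce the statement to a small-ball estimate for a single coordinate projection of a uniformly distributed unit vector. Write the Haar matrix in block form
\[
U=\begin{pmatrix} A & B\\ C & D\end{pmatrix},\qquad A\in\mathbb{R}^{(n-r)\times(n-r)},\ D\in\mathbb{R}^{r\times r}.
\]
The target is the tail bound $\mathbb{P}(\sigma_{\min}(A)<\varepsilon)\le r(n-r)\varepsilon^2$. Setting $\varepsilon=\sqrt{\delta}/\sqrt{r(n-r)}$ then gives the claim, and the claim is trivial when $\delta\ge1$.

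\emph{Step 1: pass to the small corner.} By the CS decomposition of the orthogonal matrix $U$, the singular values of $A$ different from $1$ coincide with those of $D$, in the form $\cos\theta_i$. The remaining singular values of $A$ equal $1$ when $r\le n-r$. Hence $\sigma_{\min}(A)=\sigma_{\min}(D)$ almost surely in the regime $r\le n/2$; the regime $r>n/2$ is handled symmetrically by exchanging the roles of the two blocks. It therefore suffices to bound the smallest singular value of the $r\times r$ corner $D$.

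\emph{Step 2: a negative-moment (distance) bound.} I will use the elementary inequality
\[
\sigma_{\min}(D)\ \ge\ \frac{1}{\sqrt r}\,\min_{1\le i\le r}\operatorname{dist}\bigl(d_i,\operatorname{span}\{d_j\}_{j\ne i}\bigr),
\]
which follows from $\|D^{-1}\|_2\le\|D^{-1}\|_F$. Here $d_i$ denotes the $i$-th column of $D$.

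Fix $i$ and choose a unit vector $v\in\mathbb{R}^r$ orthogonal to all $d_j$ with $j\ne i$. Then $\operatorname{dist}(d_i,\cdot)\ge|\langle v,d_i\rangle|$, and $\langle v,d_i\rangle=\langle (0,v),u_i\rangle$, where $u_i$ is the corresponding full column of $U$. Conditionally on the other $r-1$ columns of $U$ among the last $r$, the column $u_i$ is uniformly distributed on the unit sphere of their orthogonal complement, a space $E$ of dimension $n-r+1$. Consequently $\langle v,d_i\rangle=\langle w,u_i\rangle$, where $w$ is the projection of $(0,v)$ onto $E$, so $\|w\|\le1$. Moreover $\|w\|$ is bounded below on a high-probability event, since the $r-1$ excluded directions cannot absorb $(0,v)$.

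\emph{Step 3: small ball and union bound.} For $u$ uniform on $S^{m-1}$ with $m=n-r+1$ and a fixed $w$, the density of $\langle w,u\rangle/\|w\|$ is explicitly proportional to $(1-t^2)^{(m-3)/2}$. This gives
\[
\mathbb{P}\bigl(|\langle w,u\rangle|<s\bigr)\lesssim \sqrt{m}\,s/\|w\|.
\]
Applying a union bound over the $r$ columns with $s=\sqrt r\,\varepsilon$ then bounds $\mathbb{P}(\sigma_{\min}(D)<\varepsilon)$.

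The main obstacle is Step 3, specifically matching the exponent in the statement. In the complex Haar case the projection $|\langle w,u\rangle|^2$ is Beta-distributed with density bounded near $0$, so the probability is $\lesssim m s^2$. This yields exactly the quadratic tail $r(n-r)\varepsilon^2$ in the statement, with $\sqrt{\delta}$ appearing. In the real case the one-dimensional marginal only gives a linear tail, so a sharper argument would be needed to reach the stated constant; this is the point where the cited source \cite{banks2023} should be consulted.
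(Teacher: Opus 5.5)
Your proposal does not reach the stated bound. The obstacle you flag in Step 3 is not a weakness of your technique: for $U\in\mathbb{R}^{n\times n}$ Haar orthogonal, as the lemma is written here, the inequality is false, so no sharper argument can exist. Take $n=2$, $r=1$. Then $A=\cos\theta$ with $\theta$ uniform on $[0,2\pi)$, and $\mathbb{P}\bigl(\sigma_{\min}(A)<\sqrt{\delta}\bigr)=\frac{2}{\pi}\arcsin\sqrt{\delta}\ge\frac{2}{\pi}\sqrt{\delta}>\delta$ for every $\delta<4/\pi^2$. More generally, for $r=1$ your Step 1 gives $\sigma_{\min}(A)=|u_{nn}|$, whose density near $0$ is of order $\sqrt{n}$. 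The tail is therefore linear in $\varepsilon$, exactly as your real-case computation indicates. The paper gives no proof of its own; it imports the result from \cite{banks2023}, where $U$ is a complex Haar unitary matrix. It is in that setting that the quadratic tail $r(n-r)\varepsilon^2$ holds, so the lemma does not directly justify its use for real matrices in Theorem \ref{Thm5}.

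Even in the complex setting your route falls short by a factor of $r$, and the claim that it yields ``exactly'' $r(n-r)\varepsilon^2$ is an arithmetic slip. Step 2 raises the threshold to $\sqrt{r}\,\varepsilon$, so each column fails with probability at most $(n-r)\,r\varepsilon^2$. The union bound over $r$ columns then gives $r^2(n-r)\varepsilon^2$, i.e.\ success probability only $1-r\delta$.

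What is missing is the exact joint law of the singular values, in place of a column-distance bound. For $r\le n/2$, the squared singular values $x_1,\dots,x_r$ of the $r\times r$ corner $D$ of a complex Haar unitary form a Jacobi ensemble: their joint density on $[0,1]^r$ is proportional to $\prod_{i<j}(x_i-x_j)^2\prod_i(1-x_i)^{n-2r}$. The substitution $x_i=t+(1-t)u_i$ pulls out the factor $(1-t)^{r(r-1)+r(n-2r)+r}=(1-t)^{r(n-r)}$. Hence $\mathbb{P}\bigl(\sigma_{\min}(D)^2>t\bigr)=(1-t)^{r(n-r)}\ge 1-r(n-r)t$, and choosing $t=\delta/(r(n-r))$ gives the lemma.

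Your Step 1 is precisely the reduction this argument needs. It holds deterministically, and for $r>n/2$ the same computation applied to the smaller block $A$ gives the same exponent $r(n-r)$.

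A minor point in Step 2: since $v\perp d_j$ for $j\ne i$, the vector $(0,v)$ is orthogonal to every $u_j$ with $j\ne i$. So $w=(0,v)$ and $\|w\|=1$ deterministically, and no high-probability event is needed there.
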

\begin{lemma}
    \label{Lem5}
    (\cite{tao2010}, Theorem 1.1). Let $A$ be an $n\times n$ standard Gaussian random matrix. Then, $\forall \delta>0$,
    \begin{equation}
        \mathbb{P}\Big(\sigma_{\min}(A)\geq\frac{\delta}{\sqrt{n}}\Big) = \exp(-\delta^2/2-\delta) \ .
    \end{equation}
\end{lemma}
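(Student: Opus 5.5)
This statement is quoted from \cite{tao2010}, and I would not reprove the universality machinery. Instead I would reduce it to the exact Gaussian computation of Edelman, which the cited theorem recovers in the Gaussian case. The identity has to be read as a limit: $\mathbb{P}(\sigma_{\min}(A)\ge \delta/\sqrt{n})\to \exp(-\delta^2/2-\delta)$ as $n\to\infty$. For finite $n$ it is only approximate. I would state this interpretation explicitly at the start, because the equality sign cannot hold exactly for every $n$.

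The first step is to pass from $\sigma_{\min}(A)$ to the smallest eigenvalue $\lambda_{\min}$ of the real Wishart matrix $W=A^\top A$. The event of interest becomes $\{n\lambda_{\min}(W)\ge \delta^2\}$.

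The second step invokes Edelman's analysis of $\lambda_{\min}$ for square real Wishart matrices. The joint eigenvalue density of $W$ is explicit, proportional to
\[
\prod_{i<j}|\lambda_i-\lambda_j|\,\prod_i \lambda_i^{-1/2}e^{-\lambda_i/2}.
\]
Integrating out all but the smallest eigenvalue, via a tridiagonal (bidiagonal) model of $A$, gives the density of $x=n\lambda_{\min}$. In the limit this density is
\[
f(x)=\frac{1+\sqrt{x}}{2\sqrt{x}}\,e^{-(x/2+\sqrt{x})}.
\]
The third step is the direct computation
\[
\int_{\delta^2}^\infty f(x)\,dx=\exp(-\delta^2/2-\delta),
\]
which holds because $\frac{d}{dx}\bigl[-e^{-(x/2+\sqrt x)}\bigr]=f(x)$. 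Setting $x=\delta^2$ in the antiderivative then yields the stated tail.

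The main obstacle is the second step: justifying that the finite-$n$ distribution of $n\lambda_{\min}$ converges to $f$. I would first follow Edelman's route through the bidiagonal reduction and a Laplace-transform/ODE argument for the limiting distribution, and cite \cite{tao2010} for the rate and for universality. For the proof of Theorem \ref{Thm5}, only a lower tail estimate of the form $\mathbb{P}(\sigma_{\min}\ge \delta/\sqrt n)\gtrsim \exp(-\delta^2/2-\delta)$ is needed, up to the $o(1)$ error.
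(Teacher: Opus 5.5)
Your proposal is correct and follows essentially the paper's route. The paper offers nothing beyond citing Theorem~1.1 of Tao--Vu, which restates Edelman's real-Gaussian limit law $\mathbb{P}(n\sigma_{\min}(A)^2\le t)=\int_0^t\frac{1+\sqrt{x}}{2\sqrt{x}}e^{-(x/2+\sqrt{x})}\,dx+o(1)$. Your antiderivative evaluated at $t=\delta^2$ converts this into the stated tail. You are also right to read the equality as a limit $n\to\infty$: the literal identity already fails at $n=1$, where $A=(g)$ with $g$ standard normal gives $\mathbb{P}(|g|\ge 1)\approx 0.317\neq e^{-3/2}\approx 0.223$.

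One point to fix is your closing remark. In Theorem~\ref{Thm5} the lemma is applied to an $r_k\times r_k$ matrix with $r_k$ fixed, so an $o(1)$ error as $n\to\infty$ gives no quantitative guarantee there. A non-asymptotic small-ball bound such as $\mathbb{P}(\sigma_{\min}(A)\le\varepsilon/\sqrt{n})\le C\varepsilon$ (Edelman, Szarek, Rudelson--Vershynin) would be needed instead.
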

\begin{lemma}
    \label{Lem6}
    (\cite{vershynin2010}, Corollary 5.35). Let $A$ be an $m\times n$ matrix whose entries are independent standard normal random variables. Then $\forall \delta\in(0,1]$, with probability at least $1-\delta$, we have
    \begin{align}
        \sigma_{\min}(A) & \geq \sqrt{m}-\sqrt{n}-\sqrt{2\log(2/\delta)} \ , \\
        \sigma_{\max}(A) & \leq \sqrt{m}+\sqrt{n}+\sqrt{2\log(2/\delta)} \ .
    \end{align}
\end{lemma}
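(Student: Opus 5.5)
The plan is to derive both bounds in the same two steps: first bound the expected extreme singular values by a Gaussian comparison inequality, then add Gaussian concentration for Lipschitz functions. Throughout we assume $m\geq n$; otherwise $\sqrt{m}-\sqrt{n}<0$ and the lower bound on $\sigma_{\min}(A)$ is vacuous.

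\emph{Expectation bounds.} We write the extreme singular values as min-max expressions of a Gaussian process,
\begin{equation*}
\sigma_{\max}(A)=\max_{u\in S^{n-1}}\max_{v\in S^{m-1}}\langle Au,v\rangle, \qquad \sigma_{\min}(A)=\min_{u\in S^{n-1}}\max_{v\in S^{m-1}}\langle Au,v\rangle .
\end{equation*}
We compare the process $X_{u,v}=\langle Au,v\rangle$ with $Y_{u,v}=\langle g,u\rangle+\langle h,v\rangle$, where $g\in\mathbb{R}^n$ and $h\in\mathbb{R}^m$ are independent standard Gaussian vectors. A direct computation shows that the increments of $X$ are dominated by those of $Y$. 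Equality holds when $u=u'$, which is exactly the hypothesis pattern of Slepian's and Gordon's inequalities.
\begin{itemize}
\item Slepian (Sudakov--Fernique) gives $\mathbb{E}\sigma_{\max}(A)\le \mathbb{E}\lVert g\rVert_2+\mathbb{E}\lVert h\rVert_2\le\sqrt{n}+\sqrt{m}$, using Jensen's inequality $\mathbb{E}\lVert g\rVert_2\le\sqrt{n}$.
\item Gordon's min-max inequality gives $\mathbb{E}\sigma_{\min}(A)\ge \mathbb{E}\lVert h\rVert_2-\mathbb{E}\lVert g\rVert_2$.
\end{itemize}

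\emph{Main obstacle.} The delicate step is to show $\mathbb{E}\lVert h\rVert_2-\mathbb{E}\lVert g\rVert_2\ge\sqrt{m}-\sqrt{n}$ for $m\ge n$. Jensen's inequality alone only gives upper bounds on these means. I would first try to show that $a_k:=\sqrt{k}-\mathbb{E}\lVert g_k\rVert_2$ is nonincreasing in $k$. This can be attempted from the explicit formula $\mathbb{E}\lVert g_k\rVert_2=\sqrt{2}\,\Gamma((k+1)/2)/\Gamma(k/2)$ together with log-convexity of $\Gamma$ (Gautschi/Wendel-type ratio bounds). If that route stalls, I would fall back on the argument in \cite{vershynin2010}, which handles this point directly.

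\emph{Concentration.} The maps $A\mapsto\sigma_{\max}(A)$ and $A\mapsto\sigma_{\min}(A)$ are $1$-Lipschitz with respect to the Frobenius norm, by Weyl's inequality $|\sigma_i(A)-\sigma_i(B)|\le\lVert A-B\rVert_2\le\lVert A-B\rVert_F$. The entries of $A$ form a standard Gaussian vector in $\mathbb{R}^{mn}$. Gaussian concentration for Lipschitz functions therefore gives, for every $t\ge0$,
\begin{equation*}
\mathbb{P}\big(\sigma_{\min}(A)\le\mathbb{E}\sigma_{\min}(A)-t\big)\le e^{-t^2/2}, \qquad \mathbb{P}\big(\sigma_{\max}(A)\ge\mathbb{E}\sigma_{\max}(A)+t\big)\le e^{-t^2/2}.
\end{equation*}

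\emph{Conclusion.} We choose $t=\sqrt{2\log(2/\delta)}$, so that each tail probability equals $\delta/2$. A union bound then shows that, with probability at least $1-\delta$, both inequalities of the lemma hold simultaneously. This is stronger than the statement, which could be read as asserting each inequality separately.
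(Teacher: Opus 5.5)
\textbf{Verdict: correct approach, one step unproved.} Your outline is the standard argument, and it is exactly how the cited Corollary~5.35 of \cite{vershynin2010} is obtained:
\begin{itemize}
\item expectation bounds (Theorem~5.32 there) from Sudakov--Fernique and Gordon, comparing $X_{u,v}=\langle Au,v\rangle$ with $Y_{u,v}=\langle g,u\rangle+\langle h,v\rangle$;
\item one-sided Gaussian concentration for the $1$-Lipschitz maps $A\mapsto\sigma_{\max}(A)$ and $A\mapsto\sigma_{\min}(A)$;
\item a union bound with $t=\sqrt{2\log(2/\delta)}$.
\end{itemize}
The paper itself gives no proof beyond that citation. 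The following parts of your argument are correct:
\begin{itemize}
\item the increment comparison (the difference is $2(1-\langle u,u'\rangle)(1-\langle v,v'\rangle)\ge 0$, vanishing at $u=u'$);
\item the direction in which you apply Gordon's inequality;
\item the concentration step and the choice of $t$;
\item the case $m<n$, since the $\sigma_{\max}$ argument never uses $m\ge n$.
\end{itemize}

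The gap is the step you flag yourself, $\mathbb{E}\|h\|_2-\mathbb{E}\|g\|_2\ge\sqrt{m}-\sqrt{n}$, and the classical bounds you name are too coarse to close it. Put $c_k=\mathbb{E}\|g_k\|_2=\sqrt{2}\,\Gamma(\frac{k+1}{2})/\Gamma(\frac{k}{2})$. Since $c_kc_{k+1}=k$, the needed increment bound $c_{k+1}-c_k\ge\sqrt{k+1}-\sqrt{k}$ is equivalent to $c_k\le x_k$, where $x_k$ is the positive root of $x^2+(\sqrt{k+1}-\sqrt{k})x-k=0$. Both $c_k$ and $x_k$ equal $\sqrt{k}-\frac{1}{4\sqrt{k}}+O(k^{-3/2})$, and $x_k-c_k\sim\frac{1}{16}k^{-3/2}$. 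So any usable upper bound on $c_k$ must be sharp to order $k^{-3/2}$. Gautschi/Wendel, i.e.\ plain log-convexity of $\Gamma$, give at best $k/\sqrt{k+1}\le c_k\le\sqrt{k}$, which is already wrong at order $k^{-1/2}$. To prove the step yourself, you would need a refined Gamma-ratio estimate that captures $c_k=\sqrt{k}-\frac{1}{4\sqrt{k}}+\frac{1}{32}k^{-3/2}+O(k^{-5/2})$ to that order, plus a direct check of small $k$.

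Your fallback does not supply this either. The proof of Theorem~5.32 in \cite{vershynin2010} writes out only the $\sigma_{\max}$ bound and refers the $\sigma_{\min}$ bound to Gordon's inequality and Davidson--Szarek. So your fallback is really a citation of the lower expectation bound in Theorem~5.32. That is legitimate and puts you on the same footing as the paper, but as written this step is cited, not proved.
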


\section*{Acknowledgements}
	This work was supported in part by National Natural Science Foundation of China (No. 12071104).

\section*{Ethics Declarations}
 We declare that we have no commercial or associative interest that represents a conflict of interest in connection with the work submitted.

 \section*{Data Availability}
All underlying data sets can be made available upon request or are publicly available. MATLAB codes used in this paper are available upon request to the authors. 

\bibliographystyle{siamplain}
\bibliography{references}
\end{document}

%% file: ex_shared.tex
\usepackage{lipsum}
\usepackage{amsfonts}
\usepackage{graphicx}
\usepackage{epstopdf}
\usepackage{algorithmic}
\usepackage{algorithm}
\usepackage{amssymb}
\ifpdf
  \DeclareGraphicsExtensions{.eps,.pdf,.png,.jpg}
\else
  \DeclareGraphicsExtensions{.eps}
\fi

\newsiamremark{remark}{Remark}
\newsiamremark{hypothesis}{Hypothesis}
\crefname{hypothesis}{Hypothesis}{Hypotheses}
\newsiamthm{claim}{Claim}
\newtheorem{Def}{Definition}

\headers{Low-rank Tensor-Train Approximation}{Gaohang Yu, Yihao Pan, Ailun Jian, Xiaohao Cai}

\title{A Two-Sided Sketching Algorithm for Low-rank Tensor-Train Approximation\thanks{Submitted to the editors DATE.
\funding{This work was funded by the }}}

\author{Gaohang Yu\thanks{School of Science, Zhejiang University of Science and Technology, Hangzhou, China. (\email{maghyu@163.com}).}
\and Yihao Pan \thanks{School of Science, Hangzhou Dianzi University, Hangzhou, China. (\email{294002507@qq.com})}
\and Ailun Jian \thanks{School of Science, Hangzhou Dianzi University, Hangzhou, China. (\email{allenjian805@163.com})}
\and xiaohao Cai\thanks{School of Electronics and Computer Science, University of Southampton, Southampton, UK. (\email{x.cai@soton.ac.uk}).}}

\usepackage{amsopn}
